\documentclass[a4paper,10pt]{amsproc}
\usepackage{amsfonts, amsmath, amssymb, graphicx, mathrsfs}


\theoremstyle{plain}

\newtheorem{theorem}{Theorem}[section]

\theoremstyle{definition}
\newtheorem{definition}[theorem]{Definition}

\newtheorem{remark}[theorem]{Remark}
\newtheorem{counter example}[theorem]{Counter Example}

\newtheorem{corollary}[theorem]{Corollary}
\newtheorem{example}[theorem]{Example}
\newtheorem{question}[theorem]{Question}
\numberwithin{equation}{section}

\begin{document}

\title[Recent progress in Rings and Subrings of Real Valued Measurable Functions]{Recent progress in Rings and Subrings of Real Valued Measurable Functions}


\author[S. Acharyya]{Soumyadip Acharyya}
\address{Department of Mathematics, Physical, and Life Sciences, Embry-Riddle Aeronautical University Worldwide, Daytona Beach, FL 32114, USA}
\email{acharyys@erau.edu}

\author[S.K.Acharyya]{Sudip Kumar Acharyya}
\address{Department of Pure Mathematics, University of Calcutta, 35, Ballygunge Circular Road, Kolkata 700019, West Bengal, India}
\email{sdpacharyya@gmail.com}

\author[S. Bag]{Sagarmoy Bag}
\address{Department of Pure Mathematics, University of Calcutta, 35, Ballygunge Circular Road, Kolkata 700019, West Bengal, India}
\email{sagarmoy.bag01@gmail.com}
\thanks{The second author thanks the NBHM, Mumbai-400 001, India, for financial support}

\author[J. Sack]{Joshua Sack} 
\address{Department of Mathematics and Statistics, California State University Long Beach, 1250 Bellflower Blvd, Long Beach, CA 90840, USA}
\email{joshua.sack@csulb.edu}

\keywords{ Rings of Measurable functions, intermediate rings of measurable functions, separated measurable space, $\mathcal{A}$-filter on $X$, $\mathcal{A}$-ultrafilter on $X$, $\mathcal{A}$-ideal, absolutely convex ideals, hull-kernel topology, Stone-topology, free ideal} 
\thanks {}

\subjclass[2010]{Primary 54C40; Secondary 46E25}

\thanks {}

\maketitle

\begin{abstract}
Two separated realcompact measurable spaces $(X,\mathcal{A})$ and $(Y,\mathcal{B})$ are shown to be isomorphic if and only if the rings $\mathcal{M}(X,\mathcal{A})$ and $\mathcal{M}(Y,\mathcal{B})$ of all real valued measurable functions over these two spaces are isomorphic. It is furthermore shown that any such ring $\mathcal{M}(X,\mathcal{A})$, even without the realcompactness hypothesis on $X$, can be embedded monomorphically in a ring of the form $C(K)$, where $K$ is a zero dimensional Hausdorff topological space. It is also shown that given a measure $\mu$ on $(X,\mathcal{A})$, the $m_\mu$-topology on $\mathcal{M}(X,\mathcal{A})$ is 1st countable if and only if it is connected and this happens when and only when $\mathcal{M}(X,\mathcal{A})$ becomes identical to the subring $L^\infty(\mu)$ of all $\mu$-essentially bounded measurable functions on $(X,\mathcal{A})$. Additionally, we investigate the ideal structures in subrings of $\mathcal{M}(X,\mathcal{A})$ that consist of functions vanishing at all but finitely many points and functions 'vanishing at infinity' respectively. In particular, we show that the former subring equals the intersection of all free ideals in $\mathcal{M}(X,\mathcal{A})$ when $(X,\mathcal{A})$ is separated and $\mathcal{A}$ is infinite. Assuming $(X,\mathcal{A})$ is locally finite, we also determine a pair of necessary and sufficient conditions for the later subring to be an ideal of $\mathcal{M}(X,\mathcal{A})$.  
\end{abstract}

\section{Introduction}
In what follows $(X,\mathcal{A})$ stands for a nonempty set $X$ endowed with a $\sigma$-algebra $\mathcal{A}$ of subsets of $X$. Such a pair is called a measurable space. A function $f:X\mapsto \mathbb{R}$ is called $\mathcal{A}$-measurable if for any $\alpha\in \mathbb{R}, f^{-1}(\alpha,\infty)$ is a member of $\mathcal{A}$. It is well known that the family $\mathcal{M}(X,\mathcal{A})$ of all real valued $\mathcal{A}$-measurable functions on $X$ makes a commutative lattice ordered ring with unity if the relevant operations are defined pointwise on $X$. An $\mathcal{A}$-filter ($\mathcal{A}$-ultrafilter) on $X$ stands for a filter (ultrafilter) on $X$ consisting of members of $\mathcal{A}$. By an intermediate ring of measurable functions, we mean a subring $\mathcal{N}(X,\mathcal{A})$ of the ring $\mathcal{M}(X,\mathcal{A})$ which contains $\mathcal{M}^*(X,\mathcal{A})$ of all bounded $\mathcal{A}$-measurable functions on $X$. In \cite{ABS}, a recently communicated article, we have initiated a kind of duality between the ideals (maximal ideals) of  $\mathcal{M}(X,\mathcal{A})$ as well as the intermediate rings and a collection of   appropriately defined $\mathcal{A}$-filters ($\mathcal{A}$-ultrafilters) on $X$. It is also realized that the structure spaces of all the intermediate rings are one and the same. Incidentally this common structure space is seen to be homeomorphic to the aggregate of all $\mathcal{A}$-ultrafilters on $X$ equipped with the Stone-topology. The structure space of a commutative ring $R$ with unity as usual means the set of all maximal ideals of $R$ equipped with the hull-kernel topology.

The present article is indeed a continuation of our study on the above interaction between ideals (maximal ideals) of rings and subrings of $\mathcal{A}$-measurable functions and the $\mathcal{A}$-filters ($\mathcal{A}$-ultrafilters) on $X$. 
We have defined real and hyperreal maximal ideals in $\mathcal{M}(X,\mathcal{A})$ in \cite{ABS}.  
In this paper, we define realcompact measurable spaces and show that two separated realcompact measurable spaces $(X,\mathcal{A})$ and $(Y,\mathcal{B})$ are isomorphic if and only if the rings  $\mathcal{M}(X,\mathcal{A})$ and $\mathcal{M}(Y,\mathcal{B})$ are isomorphic (Theorem~\ref{thm:MeasureIsoRingIso}). This is the first principal technical result of the present paper obtained in section 2.  This may be called the measure theoretic analogue of the celebrated Hewitt's isomorphism theorem in the rings of continuous functions (see \cite[Theorem 8.3]{GJ}). In the same section we have further established that given a separated measurable space $(X,\mathcal{A})$, not necessarily realcompact, the ring $\mathcal{M}(X,\mathcal{A})$ can be embedded monomorphically inside the ring $C(R Max(X,\mathcal{A}))$ of all real valued continuous functions defined over the space $R Max(X,\mathcal{A})$. Here $R Max(X,\mathcal{A})$ represents the set of all real maximal ideals of $\mathcal{M}(X,\mathcal{A})$ with the topology inherited from the hull-kernel topology of the parent space $Max(X,\mathcal{A})$ of all maximal ideals of $\mathcal{M}(X,\mathcal{A})$ (Theorem~\ref{thm:separatedYieldsMonomorphism}).

In section 3 of this article, we introduce, via a measure $\mu$ on $(X,\mathcal{A})$, the $m_\mu$-topology on the ring $\mathcal{M}(X,\mathcal{A})$. We realize that the $m_\mu$-topology is connected if and only if it is 1st countable. Incidentally, any of these last two conditions is proved to be equivalent to the requirement that $\mathcal{M}(X,\mathcal{A})$ is identical to the subring $L^\infty(\mu)$ of all $\mu$-essentially bounded measurable functions on $(X,\mathcal{A})$ (Theorem~\ref{thm:mMuEquivalence}). A special case of this theorem on choosing $\mu$ to be the counting measure on $(X,\mathcal{A})$ reads: the $m$-topology on $\mathcal{M}(X,\mathcal{A})$ is 1st countable if and only if the $\sigma$-algebra $\mathcal{A}$ on $X$ is finite. This fact is proved in \cite[Theorem 2.16]{ABS}. 
In the same vein we also establish 
that $\mathcal{M}^*(X,\mathcal{A})$, which is a Banach space with respect to the supremum norm, is reflexive if and only if $\mathcal{A}$ is finite (Theorem~\ref{thm:reflexive} and surrounding discussion). 

In the final and 4th section, 
we have investigated some problems related to two chosen subrings of $\mathcal{M}(X,\mathcal{A})$. The first one of these is the ring $\mathcal{M}_F(X,\mathcal{A})$ of all $\mathcal{A}$-measurable functions which vanish at all but finitely many points in $X$. The second one is the ring $M_\infty (X,\mathcal{A})$ of all such functions $f$ for which $\{x\in X:\lvert f(x)\rvert \geq \epsilon\}$ is at most a finite set for each $\epsilon >0$. It is relevant to mention that several aspects of these 
rings have also been studied in \cite{AAMH2013}, \cite{AHM2009}, \cite{EMY2018}, \cite{G1966}, \cite{V1974}, \cite{V1977}, \cite{V1978}, \cite{V1981}.

\section{Measure theoretic version of Hewitt's isomorphism theorem} 
 Throughout this paper whenever we speak of an ideal unmodified we will always mean a proper ideal in the ring under consideration. We have already defined real and hyperreal maximal ideals in the ring $\mathcal{M}(X,\mathcal{A})$ in \cite{ABS}. In order to make the present paper self-contained, we reproduce these two definitions and a useful characterization of the realness of a maximal ideal.
 
\begin{definition}
A maximal ideal $M$ in $\mathcal{M}(X,\mathcal{A})$ is called real if the images under the canonical map $\pi : \mathcal{M}(X,\mathcal{A})\mapsto \mathcal{M}(X,\mathcal{A})/M$ given by $f\mapsto M(f)$ of just the constant functions is all of $\mathcal{M}(X,\mathcal{A})/M$, otherwise $M$ is called hyperreal. Here $M(f)$ stands for the residue class containing the function $f$. Whenever $M$ is a real maximal ideal of $\mathcal{M}(X,\mathcal{A})$ we say that $\mathcal{M}(X,\mathcal{A})/M$ is a real field.
\end{definition}
We say a $\sigma$-algebra $\mathcal{A}$ separates points if given two distinct points $p,q$ in $X$, there is a member $E$ of $\mathcal{A}$ which contains exactly one of them. 
We then say that the measurable space $(X,\mathcal{A})$ is separated. 
For any $f\in \mathcal{M}(X,\mathcal{A})$, we let $Z(f)=\{x\in X: f(x)=0\}$ stand for the zero set of $f$.
The following theorem is from \cite[Theorems 2.7 and 3.9]{ABS}.

\begin{theorem}
A maximal ideal $M$ in $\mathcal{M}(X,\mathcal{A})$ is real if and only if the $\mathcal{A}$-ultrafilter $Z[M]\equiv \{Z(f):f\in M\}$ of $\mathcal{A}$-measurable sets is closed under countable intersection 
\end{theorem}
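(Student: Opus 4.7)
The plan is to leverage two facts from the cited work \cite{ABS}: for a maximal ideal $M$ of $\mathcal{M}(X,\mathcal{A})$, the collection $Z[M]$ is an $\mathcal{A}$-ultrafilter, and $f\in M$ if and only if $Z(f)\in Z[M]$. I will also use that the quotient $\mathcal{M}(X,\mathcal{A})/M$ is a totally ordered field extension of $\mathbb{R}$ in which the canonical map $f\mapsto M(f)$ is order preserving; this follows by writing any $a\geq 0$ in $\mathcal{M}(X,\mathcal{A})$ as $(\sqrt{a})^{2}$ with $\sqrt{a}\in\mathcal{M}(X,\mathcal{A})$, so that non-negative pointwise elements map to non-negative cosets.

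For the forward implication, assume $M$ is real and let $f_{1},f_{2},\dots$ be members of $M$. Replacing each $f_{n}$ by $f_{n}':=|f_{n}|/(1+|f_{n}|)$, which shares its zero set with $f_{n}$, lies in $[0,1]$, and remains in $M$ (since $Z(f_{n}')=Z(f_{n})\in Z[M]$), I form the uniformly convergent series $g=\sum_{n=1}^{\infty}2^{-n}f_{n}'$. Then $g\in\mathcal{M}^{*}(X,\mathcal{A})$ with $Z(g)=\bigcap_{n}Z(f_{n})$. Writing $h_{N}=\sum_{n=1}^{N}2^{-n}f_{n}'\in M$, the pointwise bound $0\leq g-h_{N}\leq 2^{-N}$ translates, via order preservation and the realness of $M$, to $0\leq M(g)\leq 2^{-N}$ in $\mathbb{R}$. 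Letting $N\to\infty$ gives $M(g)=0$, so $g\in M$, and hence $\bigcap_{n}Z(f_{n})=Z(g)\in Z[M]$.

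For the converse I argue by contraposition. If $M$ is hyperreal, then $\mathcal{M}(X,\mathcal{A})/M$ is a non-archimedean ordered extension of $\mathbb{R}$ (any element outside $\mathbb{R}$ is either infinite itself or differs from a real by a nonzero infinitesimal, whose reciprocal is then infinite), so there exists $f\in\mathcal{M}(X,\mathcal{A})$ with $f\geq 1$ and $M(f)>n$ for every positive integer $n$. Set $g_{n}:=(f-n)^{+}$. Since $g_{n}\geq f-n$ pointwise, order preservation gives $M(g_{n})\geq M(f)-n>0$, so $g_{n}\notin M$, equivalently $Z(g_{n})=\{f\leq n\}\notin Z[M]$. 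The $\mathcal{A}$-ultrafilter property then forces $\{f>n\}=X\setminus\{f\leq n\}\in Z[M]$ for every $n$, yet $\bigcap_{n}\{f>n\}=\emptyset\notin Z[M]$, contradicting the assumed closure of $Z[M]$ under countable intersection.

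The principal technical point I expect to verify is the order preservation of the quotient map together with the zero-set criterion $f\in M\Leftrightarrow Z(f)\in Z[M]$ for maximal $M$; these sit at the heart of the ideal-filter duality of \cite{ABS}, and once they are accepted each direction of the equivalence reduces to the short computation sketched above.
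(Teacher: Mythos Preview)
Your argument is correct in both directions, but note that this paper does not actually prove the statement: it is quoted verbatim from \cite[Theorems~2.7 and 3.9]{ABS} with no proof supplied here, so there is no in-paper argument to compare against. What you have written is a self-contained proof along the classical Gillman--Jerison lines (bounded series to capture a countable intersection of zero-sets; an infinitely large element to witness failure of countable intersection in the hyperreal case), relying only on the ideal--filter correspondence $f\in M\Leftrightarrow Z(f)\in Z[M]$ and on order preservation of the quotient map, both of which are indeed available from \cite{ABS}. One small point worth making explicit: to pass from ``$M$ hyperreal'' to ``there exists $f\ge 1$ with $M(f)>n$ for all $n$'' you implicitly use that any proper ordered-field extension of $\mathbb{R}$ is non-archimedean; your parenthetical sketch of this (via the Dedekind-cut/infinitesimal dichotomy) is fine, and replacing a witness $f$ by $f^{+}+1$ yields the desired $f\ge 1$ without needing $M(|f|)=|M(f)|$.
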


An ideal $I$ in $\mathcal{M}(X,\mathcal{A})$ (respectively $\mathcal{M}^*(X,\mathcal{A})$) is called fixed if $\cap Z[I]:= \displaystyle \bigcap_{f\in I} Z(f)\neq \emptyset$, otherwise $I$ is called a free ideal. It is observed in \cite[Proposition 6]{EMY2018} that the complete list of fixed maximal ideals in $\mathcal{M}(X,\mathcal{A})$ is given by $\{M_p:p\in X \}$, where $M_p=\{f\in \mathcal{M}(X,\mathcal{A}):f(p)=0\}$. 
It is easy to see that if in addition the $\sigma$-algebra $\mathcal{A}$ on $X$ separates points, then $M_p\neq M_q$ whenever $p\neq q$ in $X$. Each fixed maximal ideal in $\mathcal{M}(X,\mathcal{A})$ is real, an easy verification. If conversely each real maximal ideal of $\mathcal{M}(X,\mathcal{A})$ is fixed, $(X,\mathcal{A})$ is called a realcompact space. 

If $Max (X,\mathcal{A})$ denotes the set of all maximal ideals of the ring $\mathcal{M}(X,\mathcal{A})$ and for $f\in \mathcal{M}(X,\mathcal{A}), \mathcal{B}_f=\{M\in Max(X,\mathcal{A}):f\in M\}$, then it is easy to establish by using some standard arguments that $\{\mathcal{B}_f:f\in \mathcal{M}(X,\mathcal{A})\}$ constitutes a base for the closed sets of $Max(X,\mathcal{A})$ if the hull-kernel topology on the later set is imposed. We will use the following two notations: $R Max(X,\mathcal{A})=\{M\in Max(X,\mathcal{A}): M$ is real maximal ideal$\}$, $F Max(X,\mathcal{A})=\{M_p:p\in X\}\equiv$ the set of all fixed maximal ideals of $\mathcal{M}(X,\mathcal{A})$. It is clear that $F Max(X,\mathcal{A})\subseteq R Max(X,\mathcal{A})$.

\begin{definition}
A bijective map $t: (X,\mathcal{A})\mapsto (Y,\mathcal{B})$ is called an isomorphism between these two measurable spaces if for $E\subseteq X, E\in \mathcal{A}$ if and only if $t(E)\in \mathcal{B}$. 
\end{definition}
	
It is easy to check that any such isomorphism $t$, induces an isomorphism $\psi_t: \mathcal{M}(Y,\mathcal{B})\mapsto \mathcal{M}(X,\mathcal{A})$ by $g\mapsto g\circ t$ between the two rings. We shall show that with the assumptions of realcompactness and separateness of the measure spaces $(X,\mathcal{A})$ and $(Y,\mathcal{B})$, any ring isomorphism between $\mathcal{M}(X,\mathcal{A})$ and $\mathcal{M}(Y,\mathcal{B})$ will generate an isomorphism between $(X,\mathcal{A})$ and $(Y,\mathcal{B})$. We need to establish several subsidiary results to prove this basic fact.

\begin{theorem}
	The family $\mathcal{B}^R=\{\mathcal{B}_f^R:f\in \mathcal{M}(X,\mathcal{A})\}$, where $\mathcal{B}_f^R=\mathcal{B}_f\cap R Max(X,\mathcal{A})$ is a $\sigma$-algebra over the set $R Max(X,\mathcal{A})$.
\end{theorem}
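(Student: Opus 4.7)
I will verify the three $\sigma$-algebra axioms for $\mathcal{B}^R$: it contains the whole space $R\mathrm{Max}(X,\mathcal{A})$, it is closed under complementation, and it is closed under countable unions. The decisive observation that makes the construction work is that for any $E\in\mathcal{A}$ the characteristic function $\chi_E$ lies in $\mathcal{M}(X,\mathcal{A})$ and has $Z(\chi_E)=X\setminus E$, so one can realize prescribed zero sets from within the ring.

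\textbf{Whole space.} Since the zero function lies in every ideal, $\mathcal{B}_0=\mathrm{Max}(X,\mathcal{A})$, hence $\mathcal{B}_0^R=R\mathrm{Max}(X,\mathcal{A})\in\mathcal{B}^R$.

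\textbf{Complements.} Given $f\in\mathcal{M}(X,\mathcal{A})$, I will set $g=\chi_{Z(f)}\in\mathcal{M}(X,\mathcal{A})$ and show $R\mathrm{Max}(X,\mathcal{A})\setminus\mathcal{B}_f^R=\mathcal{B}_g^R$. For any prime ideal $M$, the identity $fg\equiv 0\in M$ together with primeness gives $f\in M$ or $g\in M$, so $\mathcal{B}_f^R\cup\mathcal{B}_g^R=R\mathrm{Max}(X,\mathcal{A})$. On the other hand, $f+g$ is nowhere zero on $X$ (on $Z(f)$ it equals $1$, off $Z(f)$ it equals $f$), so $1/(f+g)$ is measurable; thus $f+g$ is a unit in $\mathcal{M}(X,\mathcal{A})$. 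Therefore $f$ and $g$ cannot both lie in a proper ideal, giving $\mathcal{B}_f^R\cap\mathcal{B}_g^R=\emptyset$.

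\textbf{Countable unions.} Given $\{f_n\}\subset\mathcal{M}(X,\mathcal{A})$, set $A_n=Z(f_n)$, $A=\bigcup_n A_n\in\mathcal{A}$, and $h=\chi_{X\setminus A}\in\mathcal{M}(X,\mathcal{A})$, so that $Z(h)=A$. I will show $\bigcup_n\mathcal{B}_{f_n}^R=\mathcal{B}_h^R$. For $M\in R\mathrm{Max}(X,\mathcal{A})$, having $f_n\in M$ means $A_n\in Z[M]$, and since $A_n\subseteq A$ and $Z[M]$ is a filter, $A\in Z[M]$, i.e.\ $h\in M$. For the converse, suppose $h\in M$ but no $f_n\in M$; then by the $\mathcal{A}$-ultrafilter property (and the fact that $X\setminus A_n\in\mathcal{A}$) one has $X\setminus A_n\in Z[M]$ for every $n$. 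Here is where the realness hypothesis enters via the theorem quoted from \cite{ABS}: $Z[M]$ is closed under countable intersections, so $\bigcap_n(X\setminus A_n)=X\setminus A\in Z[M]$. Combined with $A\in Z[M]$ this forces $\emptyset\in Z[M]$, a contradiction.

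\textbf{Main obstacle.} The finite-union and complementation steps use only the prime/maximal algebraic structure of $\mathcal{M}(X,\mathcal{A})$ and the availability of characteristic functions. The genuinely substantive step is closure under countable unions, where a countable union of zero sets need not be a zero set of a single function in general, and where the implication ``$\bigcup A_n\in Z[M]\Rightarrow\exists n, A_n\in Z[M]$'' is false for arbitrary $\mathcal{A}$-ultrafilters. The restriction to $R\mathrm{Max}(X,\mathcal{A})$ is exactly what repairs this, because countable-intersection closure of $Z[M]$ converts the failure of some $A_n$ to lie in $Z[M]$ into the presence of $X\setminus A$ in $Z[M]$, which contradicts $A\in Z[M]$.
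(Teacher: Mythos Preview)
Your proof is correct and follows essentially the same approach as the paper: both use $g=\chi_{Z(f)}$ for complements and $h=\chi_{X\setminus\bigcup_n Z(f_n)}$ for countable unions, with the realness hypothesis (countable-intersection closure of $Z[M]$) entering at exactly the same point. The only cosmetic difference is that for the disjointness $\mathcal{B}_f^R\cap\mathcal{B}_g^R=\emptyset$ you observe $f+g$ is a unit, whereas the paper argues via $Z(f)\cap Z(g)=\emptyset$ in the ultrafilter $Z[M]$; these are equivalent one-line observations.
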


\begin{proof}
$\mathcal{B}_1^R=\emptyset$, since there are no proper ideals containing unity. Thus $\emptyset \in \mathcal{B}^R$. Next we show that for any $f\in \mathcal{M}(X,\mathcal{A}), (\mathcal{B}_f^R)^c\equiv R Max (X,\mathcal{A})\setminus \mathcal{B}_f^R=\mathcal{B}_g^R$, where $g=\Large\chi_{Z(f)}$, the characteristic function of $Z(f)$. Indeed if $f\in (\mathcal{B}_f^R)^c$, then $f\notin M$. Since $fg=0$ and the maximal ideal $M$ in $\mathcal{M}(X,\mathcal{A})$ is prime, this implies that $g\in M$ i.e. $M\in\mathcal{B}_g^R$. Conversely if $M\in\mathcal{B}_g^R$, then $g\in M$ which implies that $Z(g)\in Z[M]$. Since $Z[M]$ is an $\mathcal{A}$-ultrafilter on $X$ and $Z(f)\cap Z(g)=\emptyset$, this implies that $Z(f)\notin Z[M]$, because each $\mathcal{A}$-filter on $X$ has finite intersection property (see \cite[Definition 2.1]{ABS}). This surely implies that $f\notin M$ i.e. $M\in (\mathcal{B}_f^R)^c$.

Finally let $\{f_n \}_{n}$ be a countable family of functions in $\mathcal{M}(X,\mathcal{A})$. We show that $\displaystyle \bigcup_{n=1}^\infty \mathcal{B}_{f_n}^R=\mathcal{B}_g$ where $g$ is the characteristic function of the set $X\setminus \displaystyle \bigcup_{n=1}^\infty Z(f_n)$, clearly $g$ is measurable.

Proof of ``$\subseteq$": Let $M\in \mathcal{B}_{f_n}^R$ for some $n\in \mathbb{N}$, then $f_n\in M$. To show that $M\in B_g$ i.e. $g\in M$ it is sufficient to show that $Z(g)\supseteq Z(f_n)$, since by Theorem 2.3 of \cite{ABS}, this means that $g$ is a multiple of $f_n$ and hence in the same ideal $M$ as $f_n$. Indeed $Z(g)=\displaystyle \bigcup_{k=1}^\infty Z(f_k)\supseteq Z(f_n)$. Thus $\displaystyle \bigcup_{n=1}^\infty \mathcal{B}_{f_n}^R\subseteq \mathcal{B}_g^R$.

Proof of ``$\supseteq$": Suppose $M\in \mathcal{B}_g^R$, then $g\in M$, consequently $Z(g)=\displaystyle \bigcup_{n=1}^\infty Z(f_n)\in Z[M]$. Since $M$ is a real maximal ideal of $\mathcal{M}(X,\mathcal{A})$, it follows from Theorem 2.2 that $Z(f_n)\in Z[M]$ for some $n\in \mathbb{N}$. This implies that $f_n\in M$ as because each $\mathcal{A}$-ultrafilter on $X$ is maximal with respect to having finite intersection property (see \cite[Definition 2.1]{ABS}). Thus $M\in \mathcal{B}_{f_n}^R$.
\end{proof}

\begin{corollary}
	$\mathcal{B}^F\equiv\{\mathcal{B}_f^F: f\in F Max(X,\mathcal{A})\}$ is a $\sigma$-algebra over $F Max(X,\mathcal{A})$.
\end{corollary}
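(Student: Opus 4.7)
The plan is to deduce this corollary directly from Theorem~2.4 by recognizing $\mathcal{B}^F$ as the \emph{trace} of the $\sigma$-algebra $\mathcal{B}^R$ on the subset $F Max(X,\mathcal{A})$. The crucial input, already noted in the preceding paragraphs, is that every fixed maximal ideal of $\mathcal{M}(X,\mathcal{A})$ is real, so $F Max(X,\mathcal{A}) \subseteq R Max(X,\mathcal{A})$.

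First I would establish the set-theoretic identity
$$\mathcal{B}_f^F = \mathcal{B}_f \cap F Max(X,\mathcal{A}) = \bigl(\mathcal{B}_f \cap R Max(X,\mathcal{A})\bigr) \cap F Max(X,\mathcal{A}) = \mathcal{B}_f^R \cap F Max(X,\mathcal{A}).$$
Combined with Theorem~2.4, which guarantees that every element of $\mathcal{B}^R$ is itself of the form $\mathcal{B}_f^R$ for some $f \in \mathcal{M}(X,\mathcal{A})$, this yields
$$\mathcal{B}^F = \{A \cap F Max(X,\mathcal{A}) : A \in \mathcal{B}^R\},$$
i.e.\ $\mathcal{B}^F$ is precisely the trace of the $\sigma$-algebra $\mathcal{B}^R$ on $F Max(X,\mathcal{A})$.

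Second, I would close the argument by invoking the standard fact that the trace of a $\sigma$-algebra on any subset is again a $\sigma$-algebra. Concretely: $\emptyset = \emptyset \cap F Max(X,\mathcal{A})$ (using $\mathcal{B}_1^F = \emptyset$); relative complements behave as $F Max(X,\mathcal{A}) \setminus \bigl(A \cap F Max(X,\mathcal{A})\bigr) = \bigl(R Max(X,\mathcal{A}) \setminus A\bigr) \cap F Max(X,\mathcal{A})$, and $R Max(X,\mathcal{A}) \setminus A \in \mathcal{B}^R$ by Theorem~2.4; and countable unions distribute as $\bigcup_n (A_n \cap F Max(X,\mathcal{A})) = \bigl(\bigcup_n A_n\bigr) \cap F Max(X,\mathcal{A})$, with $\bigcup_n A_n \in \mathcal{B}^R$.

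The only subtle point, and arguably the sole obstacle, is justifying the equality $\mathcal{B}^F = \{A \cap F Max(X,\mathcal{A}) : A \in \mathcal{B}^R\}$ rather than mere containment; this is exactly where one appeals to Theorem~2.4's assertion that $\mathcal{B}^R$ coincides with (rather than is merely generated by) the family of all $\mathcal{B}_f^R$. Should one prefer a self-contained argument bypassing the trace construction, the three steps of the proof of Theorem~2.4 transcribe verbatim to the fixed setting: fixed maximal ideals are prime (so the complement step with $g=\chi_{Z(f)}$ goes through) and, being real, they satisfy the countable intersection property used via Theorem~2.2 in the countable union step.
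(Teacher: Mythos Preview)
Your proposal is correct. The paper states this result as a corollary to Theorem~2.4 with no proof given, so your trace argument (intersecting $\mathcal{B}^R$ with the subset $F\Max(X,\mathcal{A})\subseteq R\Max(X,\mathcal{A})$) is precisely the intended one-line deduction, and your alternative of transcribing the three steps of Theorem~2.4 verbatim to fixed maximal ideals is an equally valid expansion.
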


\begin{theorem}
	Any separated measurable space $(X,\mathcal{A})$ is isomorphic to the measurable space $(F Max(X,\mathcal{A}),\mathcal{B}^F)$.
\end{theorem}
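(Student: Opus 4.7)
The plan is to exhibit an explicit candidate for the isomorphism, namely the map $t\colon X \to F\mathrm{Max}(X,\mathcal{A})$ defined by $t(p) = M_p$, and then verify that both $t$ and $t^{-1}$ send measurable sets to measurable sets. The whole argument hinges on a single bookkeeping identity
\[ \mathcal{B}_f^F \;=\; \{M_p : f \in M_p\} \;=\; \{M_p : p \in Z(f)\} \;=\; t(Z(f)), \]
valid for every $f \in \mathcal{M}(X,\mathcal{A})$, which converts the generators of $\mathcal{B}^F$ into images of zero sets in $\mathcal{A}$.

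First I would check bijectivity of $t$. Surjectivity is built into the definition $F\mathrm{Max}(X,\mathcal{A}) = \{M_p : p \in X\}$. Injectivity is precisely the observation, already recorded in the text just after the definition of $M_p$, that separateness of $\mathcal{A}$ forces $M_p \neq M_q$ whenever $p \neq q$: choose $E \in \mathcal{A}$ containing exactly one of $p,q$, and $\chi_E$ distinguishes the two ideals. With $t$ bijective, showing it is a measurable-space isomorphism reduces to the equivalence $E \in \mathcal{A} \iff t(E) \in \mathcal{B}^F$.

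Both directions drop out of the identity above. For $(\Rightarrow)$, given $E \in \mathcal{A}$ I would take $f = \chi_{X \setminus E}$, so $Z(f) = E$ and hence $t(E) = t(Z(f)) = \mathcal{B}_f^F \in \mathcal{B}^F$. For $(\Leftarrow)$, if $t(E) \in \mathcal{B}^F$ then $t(E) = \mathcal{B}_f^F = t(Z(f))$ for some $f \in \mathcal{M}(X,\mathcal{A})$, and the injectivity of $t$ forces $E = Z(f) \in \mathcal{A}$. There is no real obstacle in this proof; the only place where one must stay alert is the injectivity step, because without the separateness hypothesis distinct points of $X$ could collapse to the same $M_p$ and the bijection — hence the whole scheme — would fail.
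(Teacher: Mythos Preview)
Your proof is correct and follows essentially the same approach as the paper: the same map $p\mapsto M_p$, the same key identity $\psi(Z(f))=\mathcal{B}_f^F$, and the same device $f=\chi_{X\setminus E}$ to realize every $E\in\mathcal{A}$ as a zero set. If anything, you are more thorough than the paper, which leaves the converse direction and the details of injectivity implicit.
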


\begin{proof}
The isomorphism is given by $\psi : X\mapsto F Max (X,\mathcal{A})$ mapping $p$ to $M_p$. It is injective because $\mathcal{A}$ is separated. Furthermore an arbitrary member of $\mathcal{A}$ is of the form $Z(f)$ for some $f\in \mathcal{M}(X,\mathcal{A})$, indeed $f=\Large\chi_{X\setminus A}$. We observe that $\psi (Z(f))=\{\psi (p): p\in Z(f)\}=\{M_p:f\in M_p\}=\mathcal{B}_f^F$.
\end{proof}

\begin{theorem}
If $t:\mathcal{M}(X,\mathcal{A})\mapsto \mathcal{M}(X,\mathcal{B})$ is a ring isomorphism, then a maximal ideal $M$ in $\mathcal{M}(X,\mathcal{A})$ is real if and only if $t(M)$ is a real maximal ideal of $\mathcal{M}(Y,\mathcal{B})$.
\end{theorem}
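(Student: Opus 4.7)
The plan is to exploit the fact that the ring isomorphism $t$ must send constant functions to constant functions, and then to transfer the realness of $M$ across the induced isomorphism of quotient fields. First, since $t$ is a ring isomorphism, $t(1)=1$ and hence $t$ fixes every integer, so $t$ fixes every rational constant function (as the rationals are built from $1$ by sums, differences, products and inverses).

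The key step is to upgrade this to all real constants. In $\mathcal{M}(X,\mathcal{A})$, the pointwise relation $a\ge 0$ is purely ring-theoretic: $a\ge 0$ if and only if $a=b^2$ for some $b\in\mathcal{M}(X,\mathcal{A})$, because the pointwise square root of a non-negative measurable function is measurable. Consequently $t$ preserves non-negativity. For any real constant $r$ and any rational $q<r$, we have $r-q\ge 0$, so $t(r)-q = t(r-q)\ge 0$ in $\mathcal{M}(Y,\mathcal{B})$, meaning $t(r)(y)\ge q$ for every $y\in Y$; symmetrically $t(r)(y)\le q'$ for every rational $q'>r$. Squeezing $r$ between rationals forces $t(r)(y)=r$ for every $y$, so $t$ sends the constant function $r$ to the constant function $r$.

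Once this is established, the theorem falls out by a diagram chase. The isomorphism $t$ sends the maximal ideal $M$ to the maximal ideal $t(M)$ of $\mathcal{M}(Y,\mathcal{B})$, and the induced map $\bar t:\mathcal{M}(X,\mathcal{A})/M\to\mathcal{M}(Y,\mathcal{B})/t(M)$ given by $f+M\mapsto t(f)+t(M)$ is a ring isomorphism. Because $t$ fixes constants, $\bar t$ sends the class of the constant function $r$ on $X$ to the class of the constant function $r$ on $Y$. Hence the canonical map of constants onto the quotient is surjective for $\mathcal{M}(X,\mathcal{A})/M$ if and only if it is surjective for $\mathcal{M}(Y,\mathcal{B})/t(M)$, which is exactly the statement that $M$ is real if and only if $t(M)$ is real. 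The converse direction is free, either by symmetry or by repeating the argument with $t^{-1}$.

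The main obstacle I anticipate is precisely the second paragraph: pinning down that $t$ fixes real constants requires recognizing the pointwise order as a first-order ring-theoretic property via the square characterization. Everything else, including the passage to the quotient and the final equivalence, is routine once the constants are shown to be preserved.
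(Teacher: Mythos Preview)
Your proof is correct. Both you and the paper pass to the induced isomorphism $\bar t:\mathcal{M}(X,\mathcal{A})/M\to\mathcal{M}(Y,\mathcal{B})/t(M)$ of quotient fields, but the work is placed differently. The paper's argument is terse: it simply asserts that $M$ is real if and only if $\mathcal{M}(X,\mathcal{A})/M$ is isomorphic (as a ring) to $\mathbb{R}$, and then transfers this across $\bar t$. The non-obvious direction of that equivalence---that an abstract isomorphism with $\mathbb{R}$ forces the canonical image of the constants to be all of the quotient---is precisely the rigidity of $\mathbb{R}$ (no nontrivial ring endomorphisms), and the paper leaves it implicit. You instead prove this rigidity explicitly, and at the level of the original ring rather than the quotient: using the square characterization of non-negativity you show that $t$ itself fixes every real constant, after which realness transfers directly from the definition. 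Your route is more self-contained and arguably cleaner; the paper's is shorter only because it outsources the same order-theoretic argument to an unstated background fact about $\mathbb{R}$.
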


\begin{proof}
For any maximal ideal $M$ in $\mathcal{M}(X,\mathcal{A})$, the quotient $\mathcal{M}(X,\mathcal{A})/M$ is isomorphic to $\mathcal{M}(Y,\mathcal{B})/t(M)$ under the map $t:M(f)\mapsto t(M)(t(f))$. Hence it follows that $M$ is a real maximal ideal of $\mathcal{M}(X,\mathcal{A})$ if and only if $\mathcal{M}(X,\mathcal{A})/M$ is isomorphic to $\mathbb{R}$ if and only if $\mathcal{M}(Y,\mathcal{B})/t(M)$ is isomorphic to $\mathbb{R}$ meaning that $t(M)$ is a real maximal ideal of $\mathcal{M}(Y,\mathcal{B})$. Surely this assertion can equivalently be written in the following notations: for any $f\in\mathcal{M}(X,\mathcal{A}), t(\mathcal{B}_f^R)=\mathcal{B}_{t(f)}^R$.
\end{proof}	

\begin{remark}
The above ring isomorphism $t:\mathcal{M}(X,\mathcal{A})\mapsto \mathcal{M}(Y,\mathcal{B})$ induces an isomorphism $\hat{t}:(R Max (X,\mathcal{A}),\mathcal{B}^R)\mapsto (R Max (Y,\mathcal{B}),\mathcal{B}^R):\mathcal{B}_f^R\mapsto \mathcal{B}_{t(f)}^R$ between the respective measurable spaces of their real maximal ideals. We have organized our machinery and are ready to present the proof of the first major theorem of this section. 
\end{remark}

\begin{theorem}\label{thm:MeasureIsoRingIso}
Two separated realcompact measurable spaces $(X,\mathcal{A})$ and $(Y, \mathcal{B})$ are isomorphic if and only if the rings $\mathcal{M}(X,\mathcal{A})$ and $\mathcal{M}(Y,\mathcal{B})$ are isomorphic.
\end{theorem}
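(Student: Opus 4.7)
The forward direction is immediate from the remark following Definition 2.4: any measurable-space isomorphism $t:(X,\mathcal{A})\to(Y,\mathcal{B})$ yields the ring isomorphism $\psi_t:g\mapsto g\circ t$ from $\mathcal{M}(Y,\mathcal{B})$ onto $\mathcal{M}(X,\mathcal{A})$, and neither realcompactness nor separateness is needed for this implication.

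For the converse, my plan is to assemble the three measurable-space isomorphisms already produced in this section into a chain
\[
(X,\mathcal{A}) \;\xrightarrow{\psi_X}\; (F Max(X,\mathcal{A}),\mathcal{B}^F) \;\xrightarrow{\hat t}\; (F Max(Y,\mathcal{B}),\mathcal{B}^F) \;\xrightarrow{\psi_Y^{-1}}\; (Y,\mathcal{B}),
\]
where $\psi_X$ and $\psi_Y$ are the point-to-fixed-maximal-ideal maps of Theorem 2.6 (each a bijection of measurable spaces thanks to separateness), and $\hat t$ is constructed from the given ring isomorphism $t:\mathcal{M}(X,\mathcal{A})\to\mathcal{M}(Y,\mathcal{B})$ via Theorem 2.7 and Remark 2.8. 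Strictly speaking Remark 2.8 delivers a measurable-space isomorphism $(R Max(X,\mathcal{A}),\mathcal{B}^R)\to(R Max(Y,\mathcal{B}),\mathcal{B}^R)$, but realcompactness for both spaces identifies $R Max$ with $F Max$ on each side, and from the definitions $\mathcal{B}_f^R=\mathcal{B}_f\cap R Max$ and $\mathcal{B}_f^F=\mathcal{B}_f\cap F Max$ the two $\sigma$-algebras coincide on this common underlying set. Composing the three arrows then gives the required isomorphism $(X,\mathcal{A})\cong(Y,\mathcal{B})$.

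The only genuinely non-formal input, and what I expect to be the main obstacle, is the identification $R Max=F Max$ supplied by realcompactness: without it the ring-theoretic data encoded by $\hat t$ would live on a space strictly larger than what can be detected at the level of points, so there would be no way to transport $\hat t$ back to a set-theoretic bijection between $X$ and $Y$. Separateness plays the complementary but smaller role of making $\psi_X$ and $\psi_Y$ injective, so that points can actually be recovered from their fixed maximal ideals. With these two hypotheses in place the rest is a routine diagram chase, the bookkeeping being carried out by Theorem 2.7 in the form $t(\mathcal{B}_f^R)=\mathcal{B}_{t(f)}^R$, which ensures that $\hat t$ is compatible with the $\sigma$-algebra generators on either side.
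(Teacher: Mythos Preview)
Your proposal is correct and follows essentially the same approach as the paper's own proof: invoke Remark~2.8 to obtain an isomorphism of the real-maximal-ideal measurable spaces, use realcompactness to replace $R\Max$ by $F\Max$ (with the attendant identification $\mathcal{B}^R=\mathcal{B}^F$), and then apply Theorem~2.6 on each side via separateness. Your write-up is somewhat more explicit about the chain of maps and the role of each hypothesis, but the argument is the same.
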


\begin{proof}
If $(X,\mathcal{A})$ and $(Y,\mathcal{B})$ are isomorphic then it is trivial and we have already checked [vide Theorem 2.3] that $\mathcal{M}(X,\mathcal{A})$ and $\mathcal{M}(Y,\mathcal{B})$ are isomorphic. Conversely let $\mathcal{M}(X,\mathcal{A})$ and $\mathcal{M}(Y,\mathcal{B})$ be isomorphic as rings. Then from Remark 2.8, it follows that the measurable spaces $R Max(X,\mathcal{A},\mathcal{B}^R)$ and $R Max(Y,\mathcal{B},\mathcal{B}^R)$ become isomorphic. Since $(X,\mathcal{A})$ is realcompact, we should have $R Max(X,\mathcal{A})=F Max(X,\mathcal{A})$ and for the same reason, $RMax (Y,\mathcal{B})=F Max (Y,\mathcal{B})$. Thus the two measurable spaces $(F Max (X,\mathcal{A}),\mathcal{B}^F)$ and $(F Max (Y,\mathcal{B}),\mathcal{B}^F)$ are isomorphic. Since $(X,\mathcal{A})$ and $(Y,\mathcal{B})$ are separated it follows from Theorem 2.6 that $(X,\mathcal{A})$ and $(Y,\mathcal{B})$ are isomorphic.
\end{proof}

\begin{remark}
Since every compact measurable space $(X,\mathcal{A})$ is realcompact, the corollary 5 of \cite{EMY2018} is a special case of Theorem 2.9. We would like to mention that compact measurable spaces were introduced in \cite{EMY2018}. It was realized in \cite{ABS} that a measurable space $(X,\mathcal{A})$ is compact if and only if $\mathcal{A}$ is finite. 
\end{remark}

Next, We show that for a separated measurable space $(X,\mathcal{A})$, the ring $\mathcal{M}(X,\mathcal{A})$ is actually isomorphic to a subring of the form $C(Y)$, where $Y$ is a zero-dimensional Hausdorff topological space. In this connection we recall the fact that $Max(X,\mathcal{A})$ with hull-kernel topology is a compact Hausdorff zero-dimensional space, a result already established in \cite[Theorem 2.10]{ABS}. Since every subspace of a zero-dimensional space is zero-dimensional, it follows that the space $R Max(X,\mathcal{A})$ of all real maximal ideals of $\mathcal{M}(X,\mathcal{A})$ is a Hausdorff zero-dimensional space.

\begin{theorem}\label{thm:separatedYieldsMonomorphism}
Let $(X,\mathcal{A})$ be a separated measurable space. 
Then there exists a ring monomorphism $\psi :\mathcal{M}(X,\mathcal{A})\mapsto C(R Max (X,\mathcal{A}))$.
\end{theorem}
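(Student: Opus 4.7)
The plan is to send each $f\in\mathcal{M}(X,\mathcal{A})$ to the function on $RMax(X,\mathcal{A})$ that records the unique real number representing $f$ modulo each real maximal ideal, and then to verify that this assignment lands in $C(RMax(X,\mathcal{A}))$ and is injective.

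\textbf{Step 1 (Evaluation map and ring structure).} For each $M\in RMax(X,\mathcal{A})$, the definition of realness gives a canonical ring isomorphism $\mathcal{M}(X,\mathcal{A})/M\cong\mathbb{R}$ fixing the constants; composing with the quotient map yields a ring homomorphism $e_M:\mathcal{M}(X,\mathcal{A})\to\mathbb{R}$ with kernel $M$. I would then define $\psi:\mathcal{M}(X,\mathcal{A})\to \mathbb{R}^{RMax(X,\mathcal{A})}$ by $\psi(f)(M)=e_M(f)$. Because each $e_M$ is a ring homomorphism, $\psi$ is automatically a ring homomorphism into the product ring, so the only remaining points are continuity of each $\psi(f)$ and injectivity of $\psi$.

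\textbf{Step 2 (Continuity — the main obstacle).} For $f\in\mathcal{M}(X,\mathcal{A})$ and $r\in\mathbb{R}$, I would prove
$$\psi(f)^{-1}\bigl((r,\infty)\bigr)\;=\;\bigcup_{s>r}\Bigl(RMax(X,\mathcal{A})\setminus\mathcal{B}^{R}_{(f-s)\vee 0}\Bigr),$$
and symmetrically for $(-\infty,r)$ using $(s-f)\vee 0$. The key lemma is that for any $g\geq 0$ in $\mathcal{M}(X,\mathcal{A})$ and any $M\in RMax(X,\mathcal{A})$, the value $e_M(g)$ is non-negative (since $e_M$ is a ring map into the field $\mathbb{R}$ and $g$ is a square in the lattice-ordered sense, $g=(\sqrt{g})^{2}$, or directly since the preimage of the negatives under an order-compatible identification with $\mathbb{R}$ is excluded). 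Using $(f-s)^{+}(f-s)^{-}=0\in M$ and primality of $M$, exactly one of $(f-s)^{\pm}$ lies in $M$; a short case analysis then shows $(f-s)^{+}\notin M$ if and only if $e_M(f)>s$. The displayed set is therefore open in the hull–kernel topology because each $\mathcal{B}^{R}_{(f-s)\vee 0}$ is a basic closed set of $RMax(X,\mathcal{A})$ (Theorem preceding the corollary on $\sigma$-algebras). Since sets of the form $(r,\infty)$ and $(-\infty,r)$ generate the topology of $\mathbb{R}$, this gives $\psi(f)\in C(RMax(X,\mathcal{A}))$.

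\textbf{Step 3 (Injectivity).} If $\psi(f)=0$, then $f\in M$ for every $M\in RMax(X,\mathcal{A})$. Because $FMax(X,\mathcal{A})\subseteq RMax(X,\mathcal{A})$, in particular $f\in M_{p}$, i.e., $f(p)=0$, for every $p\in X$; hence $f=0$. (Separatedness ensures that the fixed maximal ideals actually distinguish points of $X$, consistent with the use of separatedness elsewhere in the section.) Combining Steps 1–3 gives the desired ring monomorphism $\psi:\mathcal{M}(X,\mathcal{A})\to C(RMax(X,\mathcal{A}))$. I expect the one genuinely non-trivial point to be Step 2: translating the order-theoretic fact $e_M(f)>r$ into membership/non-membership statements for the functions $(f-s)^{+}$ in $M$, and thereby expressing sub- and super-level sets of $\psi(f)$ as unions of complements of basic hull–kernel closed sets.
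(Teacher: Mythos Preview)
Your argument is correct, but it differs substantially from the paper's proof in both the construction and the verification steps.

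The paper does not use the quotient map $e_M$ directly. Instead, for each $M\in RMax(X,\mathcal{A})$ and $f\in\mathcal{M}(X,\mathcal{A})$ it forms the pullback ideal $\tilde M=\{g\in\mathcal{M}(\mathbb{R},\mathcal{B}(\mathbb{R})):g\circ f\in M\}$, shows that $\tilde M$ is prime (hence maximal), then real, and finally fixed at a single point of $\mathbb{R}$; that point is declared to be $f^\beta(M)$. Continuity is then proved via the complete regularity of $\mathbb{R}$: given an open $U\ni f^\beta(M)$ one chooses $g,h\in C(\mathbb{R})$ with $gh=0$ and $f^\beta(M)\in\mathbb{R}\setminus Z(g)\subseteq Z(h)\subseteq U$, and the primality of $\tilde N$ translates this into the inclusion $f^\beta\bigl(RMax(X,\mathcal{A})\setminus\mathcal{B}^R_{g\circ f}\bigr)\subseteq U$. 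Even the ring-homomorphism property is obtained indirectly: the paper checks $(f+g)^\beta=f^\beta+g^\beta$ only on the dense image $e(X)$ of $X$ and then extends by continuity. Your route avoids all of this machinery: the homomorphism property is automatic from the definition of $e_M$, and continuity comes from the lattice structure via $(f-s)^{\pm}$ and the square-root trick $g=(\sqrt{g})^2$ ensuring $e_M(g)\geq 0$. One small inaccuracy: it is not true that \emph{exactly} one of $(f-s)^{\pm}$ lies in $M$ (both do when $e_M(f)=s$), but your equivalence $(f-s)^{+}\notin M\iff e_M(f)>s$ is nonetheless correct, so the displayed formula for $\psi(f)^{-1}((r,\infty))$ stands. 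What the paper's approach buys is an explicit factorization $f^\beta\circ e=f$ tying the embedding to the canonical map $e:X\to Max(X,\mathcal{A})$, in the spirit of the Gillman--Jerison treatment of $\beta X$; what your approach buys is a shorter, more self-contained argument that never leaves $\mathcal{M}(X,\mathcal{A})$. Also note that neither proof actually uses separatedness for injectivity: $f\in M_p$ for all $p$ already forces $f=0$.
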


\begin{proof}
Let $e: X\mapsto Max(X,\mathcal{A})$ be the map defined by $e(x)=M_x\equiv \{h\in \mathcal{M}(X,\mathcal{A}): h(p)=0\}$. We choose $f\in \mathcal{M}(X,\mathcal{A})$. We shall show that this ``$f$" induces a continuous map $f^\beta : R Max (X,\mathcal{A})\mapsto \mathbb{R}$ with the following property: $f^\beta \circ e=f$. For that purpose, select $M\in R Max(X,\mathcal{A})$ arbitrarily i.e. $M$ is a real maximal ideal of $\mathcal{M}(X,\mathcal{A})$. Set $\tilde{M}=\{g\in\mathcal{M}(\mathbb{R},\mathcal{B}(\mathbb{R})):g\circ f\in M\}$, here $\mathcal{B}(\mathbb{R})$ is the $\sigma$-algebra of all Borel sets in $\mathbb{R}$ and $(\mathbb{R},\mathcal{B}(\mathbb{R}))$ is the corresponding measurable space on $\mathbb{R}$. Since the compose of a Borel measurable function $g:\mathcal{B}(\mathbb{R}))\mapsto (\mathbb{R},\mathcal{B}(\mathbb{R}))$ with a real valued measurable function on $(X,\mathcal{A})$ is again a measurable function on $(X,\mathcal{A})$, an easy verification, and since also the maximal ideal $M$ in the ring $\mathcal{M}(X,\mathcal{A})$ is prime, it follows that $\tilde{M}$ is a prime ideal in the ring $\mathcal{M}(\mathbb{R},\mathcal{B}(\mathbb{R}))$. It is a standard result in the theory of rings of measurable functions that each prime ideal in any ring of the form $\mathcal{M}(Y,\mathcal{B})$ with $(Y,\mathcal{B})$ a measurable space is maximal [see \cite{ABS}, corollary 2.4]. Thus $\tilde{M}$ is a maximal ideal in the ring $\mathcal{M}(X,\mathcal{A})$. We next show that $\tilde{M}$ is a real maximal ideal of this ring. It suffices to check in view of Theorem 2.2 that the $\mathcal{B}(\mathbb{R})$-ultrfilter $Z[\tilde{M}]$ on $\mathbb{R}$ has countable intersection property. Toward that claim, choose a countable family of functions $\{g_n\}_{n=1}^\infty$ from $\tilde{M}$. It follows that for each $n\in\mathbb{N}, g_n\circ f\in M$. The hypothesis that $M$ is a real maximal ideal of $\mathcal{M}(X,\mathcal{A})$ implies in view of Theorem 2.2 that $\displaystyle \bigcap_{n=1}^\infty Z(g_n\circ f)\neq \emptyset$. There exists therefore $x\in X$ such that $g_nf(x)=0$ for each $n\in\mathbb{N}$. This yields that $f(x)\in \displaystyle \bigcap_{n=1}^\infty Z(g_n)$, thus $\displaystyle \bigcap_{n=1}^\infty Z(g_n)\neq \emptyset$. To show that $\tilde{M}$ is actually a fixed maximal ideal of $\mathcal{M}(\mathbb{R},\mathcal{B}(\mathbb{R}))$ we notice that the continuous function $i:\mathbb{R}\mapsto \mathbb{R}$ given by $i(r)=r$ is a member of $\mathcal{M}(\mathbb{R},\mathcal{B}(\mathbb{R}))$. since $\tilde{M}$ is real maximal ideal, there exists $r\in \mathbb{R}$ such that $\tilde{M}(i)=\tilde{M}(\underline{r})$ in the residue class field $\mathcal{M}(\mathbb{R},\mathcal{B}(\mathbb{R}))/\tilde{M}$, here $\underline{r}$ is the constant function on $\mathbb{R}$ with value $r$. this yields that $i-\underline{r}\in\tilde{M}$ and hence $Z(i-\underline{r})\in Z[\tilde{M}]$. As the function $i-\underline{r}$ on $\mathbb{R}$ can vanish at most at one point and hence exactly one point, it follows that $Z(i-\underline{r})$ is a one-pointic set. Consequently $\tilde{M}$ is fixed maximal ideal of $\mathcal{M}(\mathbb{R},\mathcal{B}(\mathbb{R}))$ i.e. $\cap Z[\tilde{M}]$ is a one-point set.

We now set $f^\beta(M)=\cap Z[\tilde{M}]$. this defines a function $f^\beta : R Max(X,\mathcal{A})\mapsto \mathbb{R}$. If $g\in \mathcal{M}(\mathbb{R}, \mathcal{B}(\mathbb{R}))$ and $x\in X$ are such that $g\circ f\in M_x$, then it follows that $f(x)\in Z(g)$. Consequently $f(x)\in f^\beta (M_x)$. But $f^\beta (M_x)=(f^\beta \circ e)(x)$. Hence $f(x)=(f^\beta \circ e)(x)$. thus $f^\beta \circ e=f$.

To check the continuity of $f^\beta$ at a point $M$ on $R Max(X,\mathcal{A})$, let $U$ be an open set in the space $\mathbb{R}$ with $f^\beta (M)\in U$. Using the complete regularity of the space $\mathbb{R}$, we can find out $g,h\in C(\mathbb{R})$ such that $f^\beta (M)\in \mathbb{R}\setminus Z(g)\subseteq Z(h)\subseteq U$. We notice that $gh=0$. As $f^\beta \notin Z(g)$, it follows that $g\notin \tilde{M}$, hence $g\circ f\notin M$, consequently, $M\notin \mathcal{B}_{g\circ f}^R$, the set of all real maximal ideals of $\mathcal{M}(X,\mathcal{A})$ which contains the function $g\circ f$. This further implies that $M\in R Max(X,\mathcal{A})\setminus \mathcal{B}_{g\circ f}^R$, which is an open neighborhood of $M$ in the space $R Max(X,\mathcal{A})$. We now assert that $f^\beta (R Max (X,\mathcal{A})\setminus \mathcal{B}_{g\circ f}^R)\subseteq U$ and that settles the continuity of $f^\beta$ at the point $M$. Toward the proof of the last assertion, let $N\in R Max(X,\mathcal{A})\setminus \mathcal{B}_{g\circ f}^R$. Then $g\circ f\notin N$ and hence $g\notin \tilde{N}$. Since $gh=0$, as noticed earlier, it follows because of the primeness of the ideal $\tilde{N}$ that $h\in \tilde{N}$. This implies that $f^\beta (N)\in Z(h)\subseteq U$, thus it is proved that $f^\beta : R Max(X,\mathcal{A})\mapsto \mathbb{R}$ is a continuous map i.e. $f^\beta \in C(R Max(X,\mathcal{A}))$. Now if $x\in X$ and $f,g\in \mathcal{M}(X,\mathcal{A})$, then we observe that $(f+g)^\beta(e(x))=[(f+g)^\beta\circ e](x)=(f+g)(x)=f(x)+g(x)=(f^\beta\circ e)(x)+(g^\beta \circ e)(x)=(f^\beta+g^\beta)\circ e(x)$. These last relations show that the two functions $(f+g)^\beta$ and $f^\beta+g^\beta$ defined on $R Max(X,\mathcal{A})$ to $\mathbb{R}$ agree on $e(X)$, which is dense $Max(X,\mathcal{A})$ and hence dense in $R Max(X,\mathcal{A})$. The denseness of $e(X)$ in the space $Max(X,\mathcal{A})$ follows from the definition of hull-kernel topology taking care of the fact that $\mathcal{M}(X,\mathcal{A})$ is a reduced ring meaning that $'0'$ is the only nilpotent element of it. Thus we can say that $(f+g)^\beta$ and $f^\beta+g^\beta$ agree on $cl_{RMax(X,\mathcal{A})}e(X)=RMax(X,\mathcal{A})$. Analogously $(fg)^\beta=f^\beta g^\beta$. therefore the map $\psi: \mathcal{M}(X,\mathcal{A})\mapsto C(R Max(X,\mathcal{A}))$ by $f\mapsto f^\beta$ is a ring homomorphism which is clearly one-to-one because $f\neq g$ in $\mathcal{M}(X,\mathcal{A})$ implies that $f(x)\neq g(x)$ for some $x\in X$. Consequently $f^\beta(e(x))=f(x)\neq g(x)=g^\beta(e(x))$.
\end{proof}

\begin{question}
	When can we write $\psi(\mathcal{M}(X,\mathcal{A}))=C(R Max(X,\mathcal{A}))?$
\end{question}

We shall construct an example of a separated measurable space $(X,\mathcal{A})$ for which the zero-dimensional Hausdorff space $RMax(X,\mathcal{A})$ is devoid of any isolated point. Before that let us examine the possible candidates for the isolated points in this space. The following proposition gives us the necessary information in this regard.

\begin{theorem}
Let $M$ be a real maximal ideal of $\mathcal{M}(X,\mathcal{A})$. If $M$ is a free maximal ideal, then it is never an isolated point of the space $RMax(X,\mathcal{A})$. On the other hand if $M$ is a fixed maximal ideal say $M=M_x, x\in X$, then $M$ is an isolated point of $RMax(X,\mathcal{A})$ if and only if $\{x\}\in \mathcal{A}$.
\end{theorem}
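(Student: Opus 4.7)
My plan is to first recast the topological statement that $M$ is isolated in $RMax(X,\mathcal{A})$ as an algebraic condition on a single measurable function, after which the two cases of the theorem split cleanly. Since maximal ideals are prime, we have $\mathcal{B}_f\cup\mathcal{B}_g=\mathcal{B}_{fg}$ in $Max(X,\mathcal{A})$, so the complements $Max(X,\mathcal{A})\setminus\mathcal{B}_f$ are closed under finite intersection and therefore form a genuine base for the hull-kernel open sets, not merely a subbase. Intersecting with $RMax(X,\mathcal{A})$, I conclude that $M$ is isolated precisely when there exists $f\in\mathcal{M}(X,\mathcal{A})$ with $\{M\}=RMax(X,\mathcal{A})\setminus\mathcal{B}_f^R$; equivalently, $f\notin M$ while $f\in N$ for every other real maximal ideal $N$.

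Granting this reformulation, the first assertion is nearly automatic. If a real free $M$ were isolated via such an $f$, then every fixed $M_p$ is a real maximal ideal distinct from $M$ (distinct because $M$ is free, so $M\neq M_p$ for any $p$), so $f\in M_p$ and hence $f(p)=0$ for every $p\in X$. But then $f$ is the zero function, which lies in $M$, contradicting $f\notin M$.

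For the fixed case $M=M_x$ I would handle the two implications separately. For necessity, assume $M_x$ is isolated via some $f$; under the standing separation hypothesis of the section, $M_y\neq M_x$ whenever $y\neq x$, so $f(y)=0$ for every $y\neq x$ while $f(x)\neq 0$, forcing $Z(f)=X\setminus\{x\}$, and since $Z(f)\in\mathcal{A}$ we obtain $\{x\}\in\mathcal{A}$. For sufficiency, given $\{x\}\in\mathcal{A}$ I would take $f=\chi_{\{x\}}\in\mathcal{M}(X,\mathcal{A})$: clearly $f\notin M_x$ since $f(x)=1$, and $f\in M_y$ for every $y\neq x$ since $f(y)=0$.

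The only substantive remaining step, and the one I expect to be the main point of care, is verifying that $f\in N$ for every real \emph{free} maximal ideal $N$. Here my plan is to exploit primeness: from $f(1-f)=0\in N$ one gets $f\in N$ or $1-f\in N$, and the second alternative would give $Z(1-f)=\{x\}\in Z[N]$, whence the finite intersection property of the $\mathcal{A}$-ultrafilter $Z[N]$ forces $x\in A$ for every $A\in Z[N]$ and hence $x\in\bigcap Z[N]$, contradicting the freeness of $N$. Therefore $f\in N$, the identity $\{M_x\}=RMax(X,\mathcal{A})\setminus\mathcal{B}_f^R$ holds, and $M_x$ is an isolated point of $RMax(X,\mathcal{A})$.
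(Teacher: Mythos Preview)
Your proof is correct and follows essentially the same line as the paper's: the same reformulation of isolatedness via a single $f$, the same contradiction for the free case, and the same necessity argument for the fixed case (with the same implicit appeal to separatedness so that $M_y\neq M_x$ for $y\neq x$).

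The one genuine, if minor, difference is in the sufficiency step for free real maximal ideals $N$. The paper picks some $g\in N$ with $g(x)\neq 0$, observes $Z(g)\subseteq Z(f)=X\setminus\{x\}$, and invokes the divisibility/pasting lemma from \cite{ABS} to conclude $f$ is a multiple of $g$ and hence $f\in N$. You instead exploit the idempotence of $f=\chi_{\{x\}}$: from $f(1-f)=0$ and primeness either $f\in N$ or $1-f\in N$, and the latter forces $\{x\}\in Z[N]$, contradicting freeness by the finite intersection property. Your route is slightly more self-contained (no external divisibility lemma needed) and makes transparent that the real work is just primeness plus the filter structure of $Z[N]$; the paper's route, on the other hand, generalizes immediately to any $f$ with cozero set $\{x\}$, not only the characteristic function.
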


\begin{proof}
If possible let a free maximal ideal $M$ be an isolated point of $RMax(X,\mathcal{A})$. Then there exists an $f\in \mathcal{M}(X,\mathcal{A})$ such that $\{M\}=RMax(X,\mathcal{A})\setminus\mathcal{B}_f^R$. this implies that each maximal ideal $N$ in $\mathcal{M}(X,\mathcal{A})$ other than $M$ contains the function $f$ as a member of it. In particular therefore $f$ belongs to each fixed maximal ideal of $\mathcal{M}(X,\mathcal{A})$ and hence $f=0$. Consequently $\mathcal{B}_f^R=RMax(X,\mathcal{A})$ and hence $\{M\}=\phi$, a contradiction.

To prove the second part of the theorem, let $x\in X$ be such that $\{x\}\in \mathcal{A}$. To show that $M_x$ is isolated in $RMax(X,\mathcal{A})$, it suffices to show on setting $f=\chi_{\{x\}}$ that $\{M_x\}=RMax(X,\mathcal{A})\setminus\mathcal{B}_f^R$. It is clear that $f(x)\neq 0$ implies that $f\notin M_x$ and hence $M_x\notin \mathcal{B}_f^R$. Let $M$ be a maximal ideal in $\mathcal{M}(X,\mathcal{A})$ other than $M_x$. It is sufficient to show that $f\in M$, equivalently $M\in \mathcal{B}_f^R$. If $M$ is a fixed maximal ideal, then $M=M_y$ for some $y\neq x$ in $X$. since $f(y)=0$, this implies that $f\in M_y$. Assume therefore that $M$ is a free maximal ideal of $\mathcal{M}(X,\mathcal{A})$. Then there exists $g\in M$ with $g(x)\neq 0$. It follows that $Z(f)\supseteq Z(g)$. Consequently by Theorem 2.3 of \cite{ABS}, $f$ becomes a multiplier of $g$. Since $g\in M$, it follows that $f\in M$.

Conversely let $M_x$ be an isolated point of $RMax(X,\mathcal{A})$. Then there exists $f\in \mathcal{M}(X,\mathcal{A})$ such that $\{M_x\}=RMax(X,\mathcal{A})\setminus\mathcal{B}_f^R$. Then $f\notin M_x$, implying that $f(x)\neq 0$. Also for any $y\neq x$ in $X$, $f\in M_y$, which implies $f(y)=0$. Therefore cozero set of $f=X\setminus Z(f)=\{x\}$ is a member of the $\sigma$-algebra $\mathcal{A}$. 
\end{proof}	

\begin{example}[\textbf{An $RMax(X,\mathcal{A})$ space without any isolated point}] In view of Theorem 2.11, it suffices to construct a $\sigma$-algebra $\mathcal{A}$ on a suitable uncountable set $X$, for which the measurable space $(X,\mathcal{A})$ is separated but for each $x\in X, \{x\}\notin \mathcal{A}$. Let $X=[0,1]^{\omega_1}$, here $\omega_1$ is the 1st uncountable ordinal number. For each $\alpha<{\omega_1}$, let $f_\alpha: [0,1]^{\omega_1}\mapsto [0,1]^\alpha$ be the $\alpha$ th projection map defined in the usual manner. Let $\mathcal{A}=\{E\subseteq X:$ there exists $\alpha<{\omega_1}$ and $Y\subseteq [0,1]^\alpha$ such that $E=f_\alpha^{-1}(y)\}$. Since the least upper bound of any countable set of countable ordinals is again a countable ordinal, it follows that $\mathcal{A}$ is closed under countable union. It is easy to check that $\mathcal{A}$ is a $\sigma$-algebra over $X$ and each nonempty member of $\mathcal{A}$ is an uncountable set. It follows that for each $x\in X, \{x\}\notin \mathcal{A}$. The proof that $\mathcal{A}$ separates points of $X$ is routine, indeed if $x$ and $y$ are distinct points of $X$, then they must differ at their $\alpha$ th co-ordinate for some $\alpha<\omega_1$. Let $x^*=f_\alpha(x)$ and $y^*=f_\alpha(y)$. Then the set $f_\alpha^{-1}([0,1]^\alpha\setminus\{x^*\})$ is a member of $\mathcal{A}$, contains the point $y$ but misses the point $x$.
\end{example}

\section{The $m_\mu$-topology on the ring $\mathcal{M}(X,\mathcal{A})$}

In this section, we define a topology on $\mathcal{M}(X,\mathcal{A})$, namely the $m_\mu$-topology, through a measure $\mu$ on the $\sigma$-algebra $\mathcal{A}$. In the process, a new intermediate ring namely $L^\infty(\mu)$ consisting of all $\mu$-essentially bounded measurable functions in $\mathcal{M}(X,\mathcal{A})$ comes into the surface. The $m_\mu$-topology on $\mathcal{M}(X,\mathcal{A})$ which clearly depends on the measure $\mu$ turns out to be a measure theoretic generalization of the so-called $m$-topology on $\mathcal{M}(X,\mathcal{A})$. This later topology was introduced in \cite[Definition 2.14 and Theorems 2.15 and 2.16]{ABS} . We recall that a measure $\mu$ on $(X,\mathcal{A})$ is a nonnegative set function defined for all members of $\mathcal{A}$ with the two properties: $\mu (\emptyset)=0$ and $
\mu \left(\displaystyle \bigcup ^{\infty}_{n=1} E_{n}\right) = \displaystyle \sum^{\infty}_{n=1} \mu(E_{n})$, for a sequence $\{E_n\}_n$ of disjoint members of $\mathcal{A}$. For each $g$ in $\mathcal{M}(X,\mathcal{A})$ and each positive unit $u$ of this ring, set $m_\mu(g,u)=\{f\in\mathcal{M}(X,\mathcal{A}):\lvert f(x)-g(x)\rvert < u(x)$, for all $x$ almost everywhere with respect to the measure $\mu$ on $X\}$. It is routine to check that there exists a unique topology which we call $m_\mu$-topology on $\mathcal{M}(X,\mathcal{A})$ in which for each $g\in\mathcal{M}(X,\mathcal{A}), \{m_\mu(g,u):u$ is a positive unit of $\mathcal{M}(X,\mathcal{A})\}$ is a base for its open neighbourhoods. It is not at all hard to prove by using some routine arguments and the fact that a continuous function of a real valued measurable function is measurable, that $\mathcal{M}(X,\mathcal{A})$ with this $m_\mu$-topology is a topological ring. If $\mu$ is the counting measure on $(X,\mathcal{A})$, then $m_\mu$-topology reduces to the $m$-topology on $\mathcal{M}(X,\mathcal{A})$, already introduced in \cite{ABS}. Unlike $m$-topology, the set of all multiplicative units of the ring $\mathcal{M}(X,\mathcal{A})$ is not necessarily an open set in the $m_\mu$-topology for an arbitrary measure $\mu$. 

\begin{definition}
$f\in \mathcal{M}(X,\mathcal{A})$ is called a $\mu$-unit in this ring if $\mu (Z(f))=0$.
\end{definition}

\begin{theorem}
Let $U_\mu$ be the set of all $\mu$-units in the ring $\mathcal{M}(X,\mathcal{A})$. Then $U_\mu$ is open in the $m_\mu$-topology.
\end{theorem}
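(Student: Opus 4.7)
The plan is to verify openness of $U_\mu$ by showing that for every $f\in U_\mu$ one can construct a positive unit $u$ of the ring $\mathcal{M}(X,\mathcal{A})$ such that the basic neighborhood $m_\mu(f,u)$ lies entirely inside $U_\mu$. The idea is to pick $u$ small enough where $f$ is nonzero (so that functions $g$ close to $f$ inherit nonvanishing there) and arbitrary positive where $f$ vanishes (which is harmless because $\mu(Z(f))=0$).

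Concretely, I would set
\[
u(x)=\begin{cases}\tfrac{1}{2}\lvert f(x)\rvert & \text{if } x\in X\setminus Z(f),\\ 1 & \text{if } x\in Z(f).\end{cases}
\]
Since $Z(f)\in\mathcal{A}$ and $f\in\mathcal{M}(X,\mathcal{A})$, the function $u$ is $\mathcal{A}$-measurable; it is also strictly positive on all of $X$, so $1/u\in\mathcal{M}(X,\mathcal{A})$ and hence $u$ is a positive multiplicative unit of $\mathcal{M}(X,\mathcal{A})$.

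Now I would take an arbitrary $g\in m_\mu(f,u)$ and deduce $g\in U_\mu$. By definition there is a $\mu$-null set $N\in\mathcal{A}$ such that $\lvert g(x)-f(x)\rvert<u(x)$ for every $x\in X\setminus N$. For any $x$ lying in $X\setminus(Z(f)\cup N)$ we then have $\lvert g(x)-f(x)\rvert<\tfrac{1}{2}\lvert f(x)\rvert$, whence by the reverse triangle inequality $\lvert g(x)\rvert>\tfrac{1}{2}\lvert f(x)\rvert>0$. Consequently $Z(g)\subseteq Z(f)\cup N$, and by subadditivity $\mu(Z(g))\leq\mu(Z(f))+\mu(N)=0$, so $g$ is indeed a $\mu$-unit. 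This proves $m_\mu(f,u)\subseteq U_\mu$ and establishes openness.

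There is really no serious obstacle here; the only thing requiring a bit of care is checking that the chosen $u$ is a genuine positive unit of $\mathcal{M}(X,\mathcal{A})$ (as opposed to merely being positive almost everywhere). The choice of a pointwise positive value on $Z(f)$, rather than simply setting $u=\lvert f\rvert/2$, is precisely what secures this and keeps the argument entirely within the ambient ring structure rather than its quotient by $\mu$-null functions.
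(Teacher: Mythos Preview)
Your proof is correct and essentially identical to the paper's: your piecewise function $u$ is precisely $\tfrac{1}{2}\lvert f\rvert+\chi_{Z(f)}$, which is the positive unit the paper uses, and the subsequent argument ($Z(g)\subseteq Z(f)\cup N$ with both pieces $\mu$-null) is the same.
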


\begin{proof}
Let $u\in U_\mu$. Let $Z(u)=M$. It is easy to verify that $\frac{\lvert u\rvert}{2}+\chi_{M}$ is a positive unit in the ring $\mathcal{M}(X,\mathcal{A})$. We claim that $m_\mu (u,\frac{\lvert u\rvert}{2}+\chi_{M})\subseteq U_\mu$, which will complete the proof. Indeed for any $v\in m_\mu(u,\frac{\lvert u\rvert}{2}+\chi_M)$, we have $\mu (N)=0$, where $N=\{x\in X:\lvert v(x)-u(x)\rvert\geq \frac{\lvert u(x)\rvert}{2}+\chi_M(x)\}$. Also since $u\in U_\mu, \mu(M)=0$. Clearly for any $x\in (X\setminus N)\cap (X\setminus M), v(x)\neq 0$. Therefore $(X\setminus N)\cap (X\setminus M)\subseteq X\setminus Z(v)$; which clearly implies that $Z(v)\subseteq M\cup N$. Since $\mu (M)=\mu (N) = 0$, it follows by monotonicity and countable sub-additivity of $\mu$ that $\mu(Z(v))=0$, which proves our claim.
\end{proof}

\begin{corollary}
A maximal ideal in $\mathcal{M}(X,\mathcal{A})$ containing no $\mu$-units is closed in $m_\mu$-topology.	
\end{corollary}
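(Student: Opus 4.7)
The plan is to argue by contradiction, leveraging two ingredients already in hand: Theorem 3.2 (the set $U_\mu$ of $\mu$-units is $m_\mu$-open), and the fact, noted in the text before Definition 3.1, that $\mathcal{M}(X,\mathcal{A})$ with the $m_\mu$-topology is a topological ring. From this latter fact I want to invoke the standard general principle that in any topological ring the closure of an ideal is again an ideal. This follows routinely from the continuity of addition and of multiplication by a fixed ring element: if $f,g \in \overline{I}$ are limits of nets in $I$, then $f+g$ and $hf$ for $h \in \mathcal{M}(X,\mathcal{A})$ are limits of the corresponding nets, which still lie in $I$.

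Now let $M$ be a maximal ideal containing no $\mu$-units and suppose, toward a contradiction, that $M$ is not $m_\mu$-closed. Its closure $\overline{M}$ is then an ideal of $\mathcal{M}(X,\mathcal{A})$ properly containing $M$, and by maximality $\overline{M}$ must coincide with the whole ring $\mathcal{M}(X,\mathcal{A})$. In particular $1 \in \overline{M}$, which means that every $m_\mu$-open neighbourhood of $1$ must meet $M$.

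To reach a contradiction I observe that $1$ is itself a $\mu$-unit: $Z(1) = \emptyset$ has $\mu(\emptyset)=0$, so $1 \in U_\mu$. By Theorem 3.2, $U_\mu$ is open in the $m_\mu$-topology, so it is an open neighbourhood of $1$. Therefore $U_\mu \cap M \neq \emptyset$, which directly contradicts the hypothesis that $M$ contains no $\mu$-unit. Hence $M$ is closed, completing the proof.

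I do not anticipate a genuine obstacle here; the only point that requires care is the "closure of an ideal is an ideal" step, but this is a general fact about topological rings and uses only continuity of the ring operations, which has already been asserted for $(\mathcal{M}(X,\mathcal{A}), m_\mu)$. Everything else is an application of the maximality of $M$ together with Theorem 3.2.
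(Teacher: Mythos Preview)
Your proof is correct and is precisely the standard argument the paper has in mind: the corollary is stated without proof immediately after Theorem 3.2, and your use of that theorem together with the topological-ring fact that $\overline{M}$ is again an ideal (hence, by maximality, either $M$ or the whole ring) is exactly the intended deduction. One could phrase it slightly more directly---$M\subseteq \mathcal{M}(X,\mathcal{A})\setminus U_\mu$, which is closed and proper, so $\overline{M}\neq \mathcal{M}(X,\mathcal{A})$ and hence $\overline{M}=M$---but this is the same argument without the contradiction wrapper.
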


\begin{remark}
On choosing $\mu=$ counting measure on $(X,\mathcal{A})$, the above theorem reads: each maximal ideal is closed in $\mathcal{M}(X,\mathcal{A})$ in the $m$-topology. This fact is observed in \cite{ABS}.
\end{remark}

\begin{definition}
Let $L^\infty (\mu)=\{f\in \mathcal{M}(X,\mathcal{A}):\|f\|_\infty <\infty \}$, where $\|f\|_\infty =inf\{M:\mu\{t\in X:\lvert f(t)\rvert>M\}=0\}$
\end{definition}

It is routine to check that $L^\infty(\mu)$ is a subring of $\mathcal{M}(X,\mathcal{A})$ containing $\mathcal{M}^*(X,\mathcal{A})$. Furthermore it is not difficult to verify that $(L^\infty(\mu),+,.,\|\|_\infty)$ is a topological ring. The corresponding topology is generally known as the essential supnorm topology. Clearly the essential supnorm topology on $L^\infty(\mu)$ is weaker than the topology inherited from the $m_\mu$-topology on the parent space $\mathcal{M}(X,\mathcal{A})$. A natural question is: when do these two topologies coincide? The following proposition gives several conditions involving the $m_\mu$-topology, each equivalent to the above requirement.

\begin{theorem}\label{thm:mMuEquivalence}
The following statements are equivalent.
\begin{itemize}
\item[(a)] $L^\infty(\mu)=\mathcal{M}(X,\mathcal{A})$.
\item[(b)] The essential supnorm topology on $L^\infty(\mu)$ is identical to the relative $m_\mu$-topology on it.
\item[(c)] The $m_\mu$-topology on $\mathcal{M}(X,\mathcal{A})$ is connected.
\item[(d)] The $m_\mu$-topology on $\mathcal{M}(X,\mathcal{A})$ is 1st countable.
\end{itemize}
\end{theorem}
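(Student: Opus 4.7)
The plan is to establish the four equivalences by first securing $(a)\Leftrightarrow(b)$, from which $(a)\Rightarrow(c)$ and $(a)\Rightarrow(d)$ fall out immediately, and then supplying $(c)\Rightarrow(a)$ and $(d)\Rightarrow(a)$ by contrapositive arguments. The unifying theme is that (a) forces every positive unit $u$ of $\mathcal M(X,\mathcal A)$ to satisfy $u,1/u\in L^\infty(\mu)$, hence an essential lower bound $\delta>0$; conversely, an essentially unbounded $h\in\mathcal M(X,\mathcal A)$ produces a positive unit $1/(1+|h|)$ with arbitrarily small essential values, which is the wedge used to pry apart (b), (c) and (d) in turn.

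For $(a)\Rightarrow(b)$, both topologies are additive group topologies, so it suffices to compare local bases at $0$. The constant function $\varepsilon/2$ is a positive unit and $m_\mu(0,\varepsilon/2)\subseteq\{f:\|f\|_\infty\le\varepsilon/2\}$, showing every supnorm ball contains a basic $m_\mu$-neighbourhood of $0$. Conversely, an arbitrary positive unit $u$ has, by (a), an essential lower bound $\delta>0$, whence $\{f:\|f\|_\infty<\delta\}\subseteq m_\mu(0,u)$. So the two local bases at $0$ refine each other, giving (b). For $(b)\Rightarrow(a)$, I argue by contrapositive: pick $h\in\mathcal M(X,\mathcal A)\setminus L^\infty(\mu)$, replace by $|h|$, and set $u=1/(1+|h|)$; then $u$ is a positive unit with $\mu(\{u<\varepsilon/2\})>0$ for every $\varepsilon>0$. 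The function $f_\varepsilon=(\varepsilon/2)\chi_{\{u<\varepsilon/2\}}$ lies in the supnorm ball of radius $\varepsilon$ about $0$ but has $|f_\varepsilon|>u$ on a set of positive measure, so $f_\varepsilon\notin m_\mu(0,u)$. Hence no supnorm ball sits inside the relative $m_\mu$-neighbourhood $m_\mu(0,u)\cap L^\infty(\mu)$, and (b) fails.

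Once $(a)\Leftrightarrow(b)$ is in hand, $(a)\Rightarrow(c)$ and $(a)\Rightarrow(d)$ are immediate: under (a) the $m_\mu$-topology on $\mathcal M(X,\mathcal A)=L^\infty(\mu)$ coincides with the essential supnorm pseudometric topology, which is pseudometrizable (hence first countable) and makes $\mathcal M(X,\mathcal A)$ a topological vector space, in particular path-connected via line segments. For $(c)\Rightarrow(a)$ I introduce the equivalence relation $f\sim g\Leftrightarrow f-g\in L^\infty(\mu)$. Each equivalence class $[f]=f+L^\infty(\mu)$ is $m_\mu$-open: for any $g\in[f]$, the basic neighbourhood $m_\mu(g,\mathbf 1)$ lies inside $[g]=[f]$ because $|h-g|<1$ a.e.\ forces $h-g\in L^\infty(\mu)$. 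Thus $\mathcal M(X,\mathcal A)$ partitions into open cosets of $L^\infty(\mu)$; if $L^\infty(\mu)\ne\mathcal M(X,\mathcal A)$ there is more than one coset, and $\mathcal M(X,\mathcal A)=L^\infty(\mu)\sqcup(\mathcal M(X,\mathcal A)\setminus L^\infty(\mu))$ is a clopen disconnection.

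The main obstacle will be $(d)\Rightarrow(a)$. Arguing contrapositively, I assume $L^\infty(\mu)\ne\mathcal M(X,\mathcal A)$ together with a countable local base $\{m_\mu(0,u_n)\}_{n\ge1}$ at $0$ (any countable local base can be refined to this form since the $m_\mu(0,u)$'s form a base). Pick $h\ge 0$ in $\mathcal M(X,\mathcal A)$ with $\|h\|_\infty=\infty$; the disjoint level sets $G_n=\{n<h\le n+1\}$ satisfy $\{h>N\}=\bigsqcup_{n\ge N}G_n$ with positive measure for every $N$, so infinitely many $G_n$ must have $\mu(G_n)>0$. Reindex these as $E_1,E_2,\dots$; using $u_k>0$ on $E_k$, pick $\ell_k\in\mathbb N$ so that $F_k:=E_k\cap\{u_k>1/\ell_k\}$ has positive measure, and set
\[
v(x)=\sum_{k=1}^{\infty}\frac{1}{3\ell_k}\chi_{F_k}(x)+\chi_{X\setminus\bigcup_k F_k}(x),
\]
a positive unit (well-defined since the $F_k$ are disjoint). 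The witness $g_n:=(u_n/2)\chi_{E_n}$ satisfies $|g_n|<u_n$ everywhere so $g_n\in m_\mu(0,u_n)$, while on $F_n$ we have $|g_n|=u_n/2>1/(2\ell_n)>1/(3\ell_n)=v$, giving $g_n\notin m_\mu(0,v)$. Hence $m_\mu(0,v)$ is a neighbourhood of $0$ containing no $m_\mu(0,u_n)$, contradicting the base property. The delicate point I expect to belabour is the construction of the disjoint family $(E_k)$ with $\mu(E_k)>0$, which relies critically on $h$ being real-valued so that the tail sum $\sum_{n\ge N}\mu(G_n)=\mu\{h>N\}>0$ forces infinitely many $G_n$ to contribute positive mass.
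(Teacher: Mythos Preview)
Your argument is correct and follows essentially the same route as the paper: the same implication scheme, the same use of a positive unit with arbitrarily small essential values to refute (b), the clopenness of $L^\infty(\mu)$ to refute (c), and a diagonalisation against a putative countable base built from disjoint positive-measure level sets of an essentially unbounded function to refute (d). The only noteworthy difference is that your $(d)\Rightarrow(a)$ step is more elaborate than needed: the paper simply sets $v=\tfrac12 u_n$ on $E_n$ (and $1$ elsewhere) and uses the global witness $\tfrac23 u_n\in m_\mu(0,u_n)\setminus m_\mu(0,v)$, bypassing your auxiliary $\ell_k$ and $F_k$.
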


\begin{proof}
$(a)\Rightarrow (b):$ Assume that $\mathcal{M}(X,\mathcal{A})=L^\infty(\mu)$. Consider the set $m_\mu(g,u)$ where $g$ and $u$ are both arbitrary. Since $u$ is a unit in $L^\infty(\mu)$, it is easy to check that there exists $\lambda >0$ such that $u(x)\geq \lambda$ almost every where on $X$. Clearly this would imply $g\in \{f\in L^\infty(\mu):\|f-g\|_\infty\leq \lambda\}\subseteq m_\mu(g,u)$, which shows that $m_\mu(g,u)$ is a neighbourhood of $g$ in $L^\infty(\mu)$ in the essential supnorm topology. This is sufficient to conclude the two topologies coincide on $L^\infty(\mu)$. 

$(b)\Rightarrow (a):$ Suppose $L^\infty(\mu)\subsetneq \mathcal{M}(X,\mathcal{A})$. Then there exists $f\in \mathcal{M}(X,\mathcal{A})\setminus L^\infty(\mu)$ such that $f\geq 1$. Since $f\notin L^\infty(\mu), \mu \{t\in X:f(t)>n\}>0$ for all $n\in\mathbb{N}$. Let $g=\frac{1}{f}$. Then $g$ is a positive unit of $\mathcal{M}(X,\mathcal{A}), g\leq 1$ and $\mu \{t\in X: g(t)<\frac{1}{n}\}>0$ for all $n\in\mathbb{N}$. Thus $g$ takes arbitrarily small values near zero on sets of positive measure. We now consider $m_\mu(0,g)$. Clearly $m_\mu(0,g)\subseteq L^\infty(\mu)$.

Therefore $m_\mu(0,g)$ is a neighbourhood of 0 in $L^\infty(\mu)$ in the relative $m_\mu$-topology. We claim that $m_\mu(0,g)$ is not a neighbourhood of $0$ in $L^\infty(\mu)$ in the essential supnorm topology and this will imply that the statement $(b)$ is false. Indeed for every $\epsilon>0, \frac{\epsilon}{2}>g(x)$ on a set of positive measure which implies $\frac{\epsilon}{2}\notin m_\mu(0,g)$. Clearly $\frac{\epsilon}{2}\in\{f\in L^\infty(\mu):\|f\|_\infty \leq \epsilon\}$. This shows that $\{f\in L^\infty(\mu):\|f\|_\infty \leq \epsilon\}\nsubseteq m_\mu(0,g),$ which proves our claim.

Thus the statements $(a)$ and $(b)$ are equivalent.

Before establishing the equivalence of the statements $(c)$ and $(d)$ with $(a)$ or $(b)$, we make a simple observation that $L^\infty(\mu)$ is a clopen subset of $\mathcal{M}(X,\mathcal{A})$ in the $m_{\mu}$-topology. If $\mathcal{M}(X,\mathcal{A})=L^\infty(\mu)$, then then this claim is trivially true. Suppose $\mathcal{M}(X,\mathcal{A})\neq L^\infty(\mu)$. Let $f\in \mathcal{M}(X,\mathcal{A})\setminus L^\infty(\mu)$. Then it is clear that $m_\mu (f,1)\subseteq \mathcal{M}(X,\mathcal{A})\setminus L^\infty(\mu)$, which proves that $L^\infty(\mu)$ is a closed subset of $\mathcal{M}(X,\mathcal{A})$. Also it is easy to check for any $g\in L^\infty(\mu)$, that $m_\mu(g,1)\subseteq L^\infty(\mu)$ which shows that $L^\infty(\mu)$ is open in $\mathcal{M}(X,\mathcal{A})$ in the $m_\mu$-topology.

Now like any pseudonormed linear space, the space $L^\infty(\mu)$ with essential supnorm topology is path connected, in particular connected. This implies that $(a)\Rightarrow (c)$ is true. On the other hand, if $(a)$ is false, then from the above arguments we see that $L^\infty(\mu)$ turns out to be a nonempty proper clopen set of $\mathcal{M}(X,\mathcal{A})$. Hence $\mathcal{M}(X,\mathcal{A})$ becomes disconnected in the $m_\mu$-topology. Thus $(a)\Leftrightarrow (c)$ is established.

Since every pseudo normed linear space is first countable, $(a)\Rightarrow (d)$ is trivial.

$(d)\Rightarrow (a):$ Suppose $(a)$ is false. We can choose $f\geq 0$ in $\mathcal{M}(X,\mathcal{A})$ such that $\|f\|_\infty=inf \{M:\mu \{t\in X:f(t)>M\}=0\}=\infty$. Therefore for each $n\in \mathbb{N}, \mu \{t\in X:f(t)>n\}>0$, in particular $\mu\{t\in X:f(t)>1\}>0$. Hence there exists $k_1\in \mathbb{N}, k_1>1$ such that $\mu \{t\in X:1<f(t)<k_1\}>0$. For if no such $k_1\in \mathbb{N}$ exists then due to the fact that $\mu \{t\in X: f(t)>1 \}=\mu (\displaystyle \bigcup_{n=2}^\infty\{t\in X:1<f(t)<n\})$, it would imply that $\mu \{t\in X:f(t)>1 \}\leq \displaystyle \sum_{n=2}^\infty \mu \{t \in X:1<f(t)<n\}=0$, a contradiction. If we repeat this argument and take recourse to the principle of Mathematical induction, we get a strictly increasing sequence $1<k_1<k_2<....$ of natural numbers for which for each $i\in \mathbb{N}, \mu \{t\in X:k_i<f(t)<k_{i+1}\}>0$. Let for $i\in \mathbb{N}, E_i=\{t\in X:k_i<f(t)<k_{i+1}\}$. Then $E_i\neq \emptyset$ as $\mu (E_i)>0$. Furthermore $E_i\cap E_j=\emptyset$ if $i\neq j$. Thus $\{E_i\}_{i}$ is a countably infinite family of pairwise disjoint nonempty members of the $\sigma$-algebra $\mathcal{A}$ on $X$. We now assert that the $m_\mu$-topology on $\mathcal{M}(X,\mathcal{A})$ is not first countable at the point $0$ and that finishes the proof. 

Suppose toward a contradiction, let $\{m_\mu (0,u_n):n\in \mathbb{N}\}$ be a countable open base about the point $0$. Define $u:X\mapsto \mathbb{R}$ as follows: $u(x)=\frac{1}{2}u_n(x)$, if $x\in E_n, n\in \mathbb{N}$ and $u(x)=1$ if $x\in X\setminus \displaystyle \bigcup_{n=1}^\infty E_n$. Clearly $u$ is a measurable function and is a positive unit of the ring $\mathcal{M}(X,\mathcal{A})$. We notice that for each $n\in \mathbb{N}, \frac{2}{3}u_n\in m_\mu (0,u_n)\setminus m_\mu(0,u),$ because $\frac{2}{3}u_n$ is greater than $u$ on the set $E_n$ that has a positive measure. Thus $m_\mu(0,u_n)\subsetneq m_\mu (0,u)$ for each $n\in \mathbb{N}$, a contradiction to the first countability of the $m_\mu$-topology at the point $0$.
\end{proof}

\begin{remark}
A special version of the equivalence of $(a)$ and $(d)$ with $\mu\equiv$ counting measure reads: 
	
the $m$-topology on $\mathcal{M}(X,\mathcal{A})$ is 1st countable if and only if $\mathcal{A}$ is finite. This is established in \cite{ABS}.
	
We conclude this section by showing that the Banach space $\mathcal{M}^*(X,\mathcal{A})$ with supnorm topology is never reflexive unless the $\sigma$-algebra $\mathcal{A}$ on $X$ is finite. It is to be noted trivially that if $\mathcal{A}$ is finite $\sigma$-algebra containing `$n$' many members then $\mathcal{M}^*(X,\mathcal{A})$ is essentially the same as $\mathbb{R}^n$ as a Banach space and is therefore reflexive. 
\end{remark}

\begin{theorem}\label{thm:reflexive} 
Let the $\sigma$-algebra $\mathcal{A}$ on $X$ be infinite. Then $\mathcal{M}^*(X,\mathcal{A})$ contains a copy of $\ell^\infty$ as a closed subspace.
\end{theorem}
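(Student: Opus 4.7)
The plan is to produce an isometric linear embedding $T:\ell^\infty \to \mathcal{M}^*(X,\mathcal{A})$; its image is then automatically a closed subspace of $\mathcal{M}^*(X,\mathcal{A})$ because $\ell^\infty$ is complete and isometries preserve completeness.

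The main preparatory step is a purely measure-theoretic lemma: an infinite $\sigma$-algebra $\mathcal{A}$ on $X$ admits a countably infinite family $\{E_n\}_{n=1}^{\infty}$ of pairwise disjoint nonempty members. I would argue by cases on the $\mathcal{A}$-atoms, where an $\mathcal{A}$-atom is a nonempty $A\in\mathcal{A}$ having no proper nonempty $\mathcal{A}$-subset. If $\mathcal{A}$ has infinitely many atoms, any two distinct atoms are disjoint and we are done. Otherwise, list the finitely many atoms $A_1,\dots,A_k$ and set $B=X\setminus\bigcup_{i=1}^{k}A_i$; infinitude of $\mathcal{A}$ forces $B$ to be nonempty, and the trace $\sigma$-algebra $\mathcal{A}\cap B$ has no atoms. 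A recursive splitting then yields a strictly decreasing chain $B=C_0\supsetneq C_1\supsetneq C_2\supsetneq\cdots$ in $\mathcal{A}\cap B$: at each stage $C_n$ is nonempty and, being atom-free, is not itself an atom, so it properly contains some nonempty $C_{n+1}\in\mathcal{A}\cap B$, and $C_{n+1}$ inherits the atom-free property. Setting $E_{n+1}=C_n\setminus C_{n+1}$ delivers the required pairwise disjoint nonempty sequence.

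With such a family in hand, define $T:\ell^\infty\to\mathcal{M}^*(X,\mathcal{A})$ by
\[
T(a)=\sum_{n=1}^{\infty} a_n\,\chi_{E_n}, \qquad a=(a_n)_n\in\ell^\infty.
\]
Measurability of $T(a)$ is immediate: for $\alpha\in\mathbb{R}$, the set $\{x:T(a)(x)>\alpha\}$ is either a countable union of certain $E_n$'s (when $\alpha\geq 0$) or such a union together with $X\setminus\bigcup_{n}E_n$ (when $\alpha<0$), and in either case lies in $\mathcal{A}$. Since $T(a)$ takes the value $a_n$ on $E_n$ and $0$ outside $\bigcup_n E_n$, we obtain $\|T(a)\|_\infty=\sup_n|a_n|=\|a\|_\infty$, so $T$ is a bounded linear isometry. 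Completeness of $\ell^\infty$ then forces $T(\ell^\infty)$ to be complete, hence closed in $\mathcal{M}^*(X,\mathcal{A})$, which is the desired closed copy of $\ell^\infty$.

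I expect the only genuine obstacle to be the preparatory $\sigma$-algebra lemma; once an infinite pairwise disjoint family of nonempty $\mathcal{A}$-sets is available, the construction and verification for $T$ are routine bounded-linear-map bookkeeping.
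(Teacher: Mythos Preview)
Your proof is correct and follows essentially the same approach as the paper: obtain a pairwise disjoint sequence $\{E_n\}$ of nonempty sets in $\mathcal{A}$, then send $(a_n)\mapsto\sum_n a_n\chi_{E_n}$ and observe this is a linear isometry, whence its range is closed. The only difference is that the paper cites the disjoint-sequence lemma from \cite{ABS} rather than proving it, whereas you supply the atom/atom-free splitting argument directly.
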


\begin{proof}
Since $\mathcal{A}$ is infinite, there exists a pairwise disjoint sequence $\{E_n\}_{n}$ of nonempty members of $\mathcal{A}$ (vide \cite[Lemma 2.12]{ABS}). For each $i\in \mathbb{N}$, define $f_i$ in $\mathcal{M}(X,\mathcal{A})$ as follows.

\[   
f_i(E_j) = 
\begin{cases}
1, &\text{if } i=j\\
0 &\text{if } i\neq j \\
\end{cases}
\]
and $f_i=0$ on $X\setminus \displaystyle \bigcup_{j=1}^\infty E_j$.
Now define the function $\varphi :\ell^\infty\mapsto \mathcal{M}^*(X,\mathcal{A})$ by the formula: $\varphi \{a_n\}_n=\displaystyle \sum_{n=1}^\infty a_nf_n=f$, say, here $\ell^\infty$ is the Banach space of all bounded sequences of real numbers with supremum norm. It is clear that $\|\varphi \{a_n\}_n\|=\|\{a_n \}_n\|$, therefore $\varphi$ defines an isometric isomorphism on $l^\infty$ onto a closed subspace of $\mathcal{M}^*(X,\mathcal{A})$. 
\end{proof}
Since every closed subspace of a reflexive Banach space is reflexive and $l^\infty$ is a well known non-reflexive Banach space, it follows from Theorem~\ref{thm:reflexive} that when $X$ is infinite, then $\mathcal{M}(X,\mathcal{A})$ is not reflexive.

\section{Two special subrings of $\mathcal{M}(X,\mathcal{A})$}
We introduce the following two subrings of $\mathcal{M}^*(X,\mathcal{A})$.

\begin{definition}
$\mathcal{M}_F(X,\mathcal{A})=\{f\in\mathcal{M}(X,\mathcal{A}):X\setminus Z(f)$ is at most a finite set. 
\end{definition}

\begin{definition}
$\mathcal{M}_\infty(X,\mathcal{A})=\{f\in\mathcal{M}(X,\mathcal{A}):$ for all $\epsilon >0, \{x\in X:\lvert f(x)\rvert\geq\epsilon \}$ is at most finite set$\}$.
\end{definition}
It is easy to check that $\mathcal{M}_F(X,\mathcal{A})\subseteq \mathcal{M}_\infty(X,\mathcal{A})\subseteq\mathcal{M}^*(X,\mathcal{A})$. Furthermore, on using same routine arguments, it is not hard to verify that $\mathcal{M}_F(X,\mathcal{A})$ is an ideal of $\mathcal{M}(X,\mathcal{A})$ and hence of $\mathcal{M}^*(X,\mathcal{A})$ and $\mathcal{M}_\infty(X,\mathcal{A})$ is an ideal of $\mathcal{M}^*(X,\mathcal{A})$. $\mathcal{M}_\infty(X,\mathcal{A})$ is only a subring of $\mathcal{M}(X,\mathcal{A})$ but not necessarily an ideal of $\mathcal{M}(X,\mathcal{A})$. Indeed if $X=\mathbb{R}$ and $\mathcal{A}$ is the $\sigma$-algebra of all Borel sets in $\mathbb{R}$, then the function $f:X\mapsto\mathbb{R}$ given by $f(x)=\frac{1}{x}$ if $x\in\mathbb{N}$ and $f(x)=0$ otherwise is a function in $\mathcal{M}_\infty(X,\mathcal{A})$. We observe that the function $g:X\mapsto\mathbb{R}$ defined by $g(x)=x$ if $x\in\mathbb{N}$ and $g(x)=0$, otherwise belongs to $\mathcal{M}(X,\mathcal{A})$ but $fg\notin\mathcal{M}_\infty(X,\mathcal{A})$. We shall determine conditions both necessary and sufficient for $\mathcal{M}_\infty(X,\mathcal{A})$ to become an ideal of $\mathcal{M}(X,\mathcal{A})$ and also for the equality $\mathcal{M}_F(X,\mathcal{A})=\mathcal{M}_\infty(X,\mathcal{A})$.

It is clear that if the $\sigma$-algebra $\mathcal{A}$ on $X$ is finite then $\mathcal{M}_F(X,\mathcal{A})=\mathcal{M}^*(X,\mathcal{A})$. On the other hand if $\mathcal{A}$ is infinite then by Lemma 2.12 of \cite{ABS}, there exists an  infinite family $\{E_n\}_{n}$ of pairwise disjoint nonempty sets in $\mathcal{A}$. The function $f:X\mapsto \mathbb{R}$ defined by $f(E_n)=\frac{1}{n}, n\in\mathbb{N}$ and $f\left(X\setminus \displaystyle \bigcup_{n=1}^\infty E_n\right)=0$ belongs to $\mathcal{M}^*(X,\mathcal{A})\setminus\mathcal{M}_F(X,\mathcal{A})$. Thus $\mathcal{M}_F(X,\mathcal{A})$ is a proper ideal of $\mathcal{M}^*(X,\mathcal{A})$ if and only if $\mathcal{A}$ is infinite.

From now in this section we shall assume that $\mathcal{A}$ is an infinite $\sigma$-algebra on $X$ and the measurable space $(X,\mathcal{A})$ is separated. The next result connects $\mathcal{M}_F(X,\mathcal{A})$ with the free ideals of the rings $\mathcal{M}(X,\mathcal{A})$ and $\mathcal{M}^*(X,\mathcal{A})$. We recall from \cite{ABS}, that an ideal $I$(proper) in $\mathcal{M}(X,\mathcal{A})$ (respectively $\mathcal{M}^*(X,\mathcal{A})$) is called free if $\displaystyle \bigcap_{f\in I}Z(f)=\emptyset$.

\begin{theorem}
\begin{itemize}
\item[(a)] $\mathcal{M}_F(X,\mathcal{A})$ is equal to the intersection of all free ideals of $\mathcal{M}(X,\mathcal{A})$(respectively $\mathcal{M}^*(X,\mathcal{A})$).
\item[(b)] $\mathcal{M}_F(X,\mathcal{A})$ is a free ideal of $\mathcal{M}(X,\mathcal{A})$ (respectively $\mathcal{M}^*(X,\mathcal{A})$) if and only if $\mathcal{A}$ is locally finite in the sense that given $x\in X$, there is an $A\in\mathcal{A}$ such that $x\in A$ and $A$ is a finite set.[compare with 4D 3.5\cite{GJ}]. 
\end{itemize}
\end{theorem}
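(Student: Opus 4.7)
For part (a), my plan is to prove the two containments separately. For the easy direction $\mathcal{M}_F(X,\mathcal{A})\subseteq\bigcap\{I:I\text{ is a free ideal}\}$, I will take $f\in\mathcal{M}_F(X,\mathcal{A})$ with cozero set $\{x_1,\ldots,x_n\}$ and a free ideal $I$, use freeness to pick $g_i\in I$ with $g_i(x_i)\neq 0$, and set $h=g_1^2+\cdots+g_n^2\in I$. Then $h(x_i)>0$ for each $i$, so $Z(h)\subseteq Z(f)$, and Theorem 2.3 of \cite{ABS} (already invoked twice in the paper) makes $f$ a multiple of $h$, whence $f\in I$. For the $\mathcal{M}^*(X,\mathcal{A})$ case, the multiplier $k:=f/h$ on $X\setminus Z(h)$, extended by $0$ on $Z(h)$, is bounded: outside the finite cozero set $\{x_1,\ldots,x_n\}$ we have $f=0$, and on this set $h$ is bounded below by $\min_i h(x_i)>0$.

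For the reverse containment, given $f\notin\mathcal{M}_F(X,\mathcal{A})$ I will construct a free ideal $J$ with $f\notin J$. The plan is to first extract an infinite pairwise disjoint family $\{E_n\}_{n\geq 1}$ of non-empty members of $\mathcal{A}$ contained in $X\setminus Z(f)$, and then define $J=\{g\in\mathcal{M}(X,\mathcal{A}):g\text{ vanishes on }E_n\text{ for all but finitely many }n\}$. The ideal axioms are routine. Freeness splits into cases: if $x\in E_k$ the witness $\chi_{E_k}\in J$ is non-zero at $x$, and if $x\in X\setminus\bigcup_n E_n$ the witness $\chi_{X\setminus\bigcup_n E_n}\in J$ is non-zero at $x$. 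Moreover $f\notin J$ since $f$ is non-zero on every $E_n$. The same argument works for $\mathcal{M}^*(X,\mathcal{A})$ because the witnessing characteristic functions are bounded. The hard step, and the main obstacle of the whole theorem, is the extraction of $\{E_n\}$: if $f$ takes infinitely many distinct non-zero values the level sets $f^{-1}(\{v_n\})$ already suffice, but otherwise $X\setminus Z(f)$ is a finite disjoint union of level sets and some $E=f^{-1}(\{v\})$ must be infinite. Here the separatedness of $(X,\mathcal{A})$ enters through a short atom argument, namely that a finite $\sigma$-algebra on an infinite set cannot separate points lying in the same atom, which forces $\mathcal{A}|_E$ to be infinite; Lemma 2.12 of \cite{ABS} applied inside $(E,\mathcal{A}|_E)$ then supplies the required disjoint family, whose members remain in $\mathcal{A}$ since $E\in\mathcal{A}$.

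Part (b) is a short coda. If $\mathcal{A}$ is locally finite, then for each $x\in X$ I can pick a finite $A\in\mathcal{A}$ containing $x$, and $\chi_A\in\mathcal{M}_F(X,\mathcal{A})$ with $\chi_A(x)\neq 0$ gives $\bigcap_{g\in\mathcal{M}_F}Z(g)=\emptyset$, so $\mathcal{M}_F(X,\mathcal{A})$ is free. Conversely, if $\mathcal{M}_F(X,\mathcal{A})$ is free, then for every $x\in X$ some $g\in\mathcal{M}_F$ satisfies $g(x)\neq 0$; the cozero set $X\setminus Z(g)\in\mathcal{A}$ is finite and contains $x$, so $\mathcal{A}$ is locally finite. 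The $\mathcal{M}^*(X,\mathcal{A})$ version is identical since $\mathcal{M}_F\subseteq\mathcal{M}^*$ and the same $\chi_A$ serves as witness.
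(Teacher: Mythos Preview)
Your argument is correct. The forward containment in (a) and all of (b) are carried out exactly as in the paper, down to the sum-of-squares trick and the invocation of Theorem~2.3 of \cite{ABS}.

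The reverse containment in (a) is where you diverge. The paper chooses a countably infinite set of \emph{points} $\{a_1,a_2,\dots\}$ in $X\setminus Z(h)$, sets $J=\{k:k(a_n)=0\text{ for all but finitely many }n\}$, and then uses separatedness of $\mathcal{A}$ at the \emph{freeness} stage: given $x$, it intersects countably many separating sets to produce $A\in\mathcal{A}$ containing $x$ and missing every $a_n$, so that $\chi_A\in J$ witnesses $x\notin\bigcap Z[J]$. You instead spend the separatedness hypothesis up front, combining it with Lemma~2.12 of \cite{ABS} and a short atom argument to manufacture an infinite pairwise disjoint family $\{E_n\}\subseteq\mathcal{A}$ inside $X\setminus Z(f)$; your ideal $J$ is then defined in terms of these \emph{sets}, and freeness becomes immediate via $\chi_{E_k}$ and $\chi_{X\setminus\bigcup E_n}$. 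Your route requires a case split (infinitely many values of $f$ versus an infinite level set) and an appeal to the trace $\sigma$-algebra, so the setup is heavier; the payoff is that freeness is a one-liner and the $\mathcal{M}^*(X,\mathcal{A})$ case needs no extra comment because every witness is already a characteristic function. The paper's route is lighter to set up but pushes the real work into the freeness verification.
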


\begin{proof}
(a) Let $I$ be a free ideal of $\mathcal{M}(X,\mathcal{A})$. We show that $\mathcal{M}_F(X,\mathcal{A})\subseteq I$. Let $f\in \mathcal{M}_F(X,\mathcal{A})$. Then $X\setminus Z(f)$ is a finite set say $X\setminus Z(f)=\{x_1,x_2,...,x_n\}$. Since $I$ is free for each $i$, there is an $f_i\in I$ such that $f_i(x_i)\neq 0$. Take $g=f_1^2+f_2^2+...+f_n^2$. Then $g\in I$ and $Z(g)\cap (X\setminus Z(f))=\emptyset$. Thus $Z(g)\subseteq Z(f)$. Hence by pasting lemma \cite[Theorem 2.2]{ABS}, $f$ is a multiple of $g$ and therefore $f\in I$. Thus $\mathcal{M}_F(X,\mathcal{A})\subseteq I$. Conversely let $h\in \mathcal{M}(X,\mathcal{A})\setminus\mathcal{M}_F(X,\mathcal{A})$. Then there exists a countably infinite subset $\{a_1,a_2,....,a_n,...\}$ of $X\setminus Z(h)$. Let $J=\{k\in\mathcal{M}(X,\mathcal{A}):k$ vanishes at all but possibly finitely many $a_k$'s$\}$. Then $J$ is an ideal of $\mathcal{M}(X,\mathcal{A})$ and $h\notin J$. Also $\mathcal{M}_F(X,\mathcal{A})\subseteq J$. We now show that $J$ is a free ideal of $\mathcal{M}(X,\mathcal{A})$ and this completes the proof that $\mathcal{M}_F(X,\mathcal{A})$ is equal to the intersection of all free ideals of $\mathcal{M}(X,\mathcal{A})$. For that purpose, choose $x\in X$. Without loss of generality we assume that $x\neq a_n$ for each $n\in\mathbb{N}$. Then since $\mathcal{A}$ is separating, for each $n\in\mathbb{N}$, there is an $A_n\in\mathcal{A}$ such that $x\in A_n$ but $a_n\notin A_n$. Let $A=\displaystyle \bigcap_{n=1}^\infty A_n$. Then $x\in A\in \mathcal{A}$ and $\mathcal{A}\cap\{a_1,a_2,...,a_n,...\}=\emptyset$. It is then clear that the characteristic function $\chi_{X\setminus A}\in J$ and it does not vanish at the point $x$. Thus $J$ becomes a free ideal of $\mathcal{M}(X,\mathcal{A})$.

(b) We prove only the case when $\mathcal{M}_F(X,\mathcal{A})$ is a free ideal of $\mathcal{M}(X,\mathcal{A})$. The proof for the case when $\mathcal{M}_F(X,\mathcal{A})$ is also a free ideal of $\mathcal{M}^*(X,\mathcal{A})$ can be done analogously.

Let $\mathcal{M}_F(X,\mathcal{A})$ be a free ideal of $\mathcal{M}(X,\mathcal{A})$. Let $x\in X$. Then there is some $f\in \mathcal{M}_F(X,\mathcal{A})$ such that $f(x)\neq 0$. Hence $x\in X\setminus Z(f)$, which is a finite set in $\mathcal{A}$.

Conversely suppose $\mathcal{A}$ is locally finite. To show that $\mathcal{M}_F(X,\mathcal{A})$ is a free ideal, let $x\in X$. Then there exists $A\in\mathcal{A}$ such that $x\in A$ and $A$ is finite. It follows that the characteristic function $\chi_A$ is a member of $\mathcal{M}_F(X,\mathcal{A})$ and $\chi_A(x)\neq 0$.
\end{proof}

\begin{example}
If $X$ is a $T_1$ topological space (every one-pointic set is closed) then each singleton $\{x\}$ is in the Borel sets $\mathcal{B}(X)$ and thus the Borel sets form a locally finite $\sigma$-algebra on $X$.
\end{example}

\begin{remark}
Portion (b) of Theorem 4.3 is still valid if the separatedness hypothesis about $\mathcal{A}$ is dropped.
\end{remark}

\begin{example}
Here is an example of a separating $\sigma$-algebra on $\mathbb{N}$ containing no one-pointic set as a member. Let $\mathfrak{F}=\{\{1,2\},\{3,4\},\{5,6\},...,\{2n-1,2n\},....\}$. Then the smallest $\sigma$-algebra $\mathcal{A}$ on $\mathbb{N}$ containing $\mathfrak{F}$ is locally finite but no singleton is in $\mathcal{A}$. It is fairly easy to check that $\mathcal{A}$ consists of all (possibly empty) union of members of $\mathfrak{F}$.
\end{example}

\begin{theorem}
Let $(X,\mathcal{A})$ be locally finite. Then the following three statements are equivalent.

\begin{itemize}
\item[(1)] $\mathcal{A}$ is a finite $\sigma$-algebra on $X$. 
\item[(2)] $\mathcal{M}_F(X,\mathcal{A})=\mathcal{M}_\infty(X,\mathcal{A})$.
\item[(3)] $\mathcal{M}_\infty(X,\mathcal{A})$ is an ideal (possibly improper) of $\mathcal{M}(X,\mathcal{A})$.

\end{itemize}
\end{theorem}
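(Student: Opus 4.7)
The plan is to close the cycle $(1)\Rightarrow(2)\Rightarrow(3)\Rightarrow(1)$, where the first two implications are cheap and the real content sits in $(3)\Rightarrow(1)$. For $(1)\Rightarrow(2)$, I would combine finiteness of $\mathcal{A}$ with local finiteness to conclude that $X$ itself is finite: every point lies in some finite member of $\mathcal{A}$, and with only finitely many members to draw from, $X$ is a finite union of finite sets. On a finite $X$ every function trivially vanishes off a finite set, so $\mathcal{M}_F(X,\mathcal{A})=\mathcal{M}(X,\mathcal{A})=\mathcal{M}_\infty(X,\mathcal{A})$. For $(2)\Rightarrow(3)$, I would simply invoke the already-noted fact that $\mathcal{M}_F(X,\mathcal{A})$ is an ideal of $\mathcal{M}(X,\mathcal{A})$, so equality with $\mathcal{M}_\infty(X,\mathcal{A})$ transfers the ideal property.

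The main job is $(3)\Rightarrow(1)$, which I would prove by contrapositive. Assuming $\mathcal{A}$ is infinite, I want to exhibit $f\in\mathcal{M}_\infty(X,\mathcal{A})$ and $g\in\mathcal{M}(X,\mathcal{A})$ with $fg\notin\mathcal{M}_\infty(X,\mathcal{A})$, thereby blocking $\mathcal{M}_\infty(X,\mathcal{A})$ from being an ideal. First I would extract a pairwise disjoint sequence $\{E_n\}$ of nonempty sets in $\mathcal{A}$ via Lemma 2.12 of \cite{ABS}. Then, picking $x_n\in E_n$ and invoking local finiteness, I would select a finite $A_n\in\mathcal{A}$ with $x_n\in A_n$ and set $B_n=A_n\cap E_n$; each $B_n$ is a nonempty finite member of $\mathcal{A}$, and the $B_n$'s inherit pairwise disjointness from the $E_n$'s. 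The candidates are
\begin{equation*}
f=\sum_{n=1}^\infty \tfrac{1}{n}\,\chi_{B_n},\qquad g=\sum_{n=1}^\infty n\,\chi_{B_n},
\end{equation*}
both measurable as countable superpositions on disjoint measurable pieces. The verification to record is that $\{x\in X:\lvert f(x)\rvert\geq\epsilon\}=\bigcup_{n\leq 1/\epsilon}B_n$ is a finite union of finite sets, putting $f$ in $\mathcal{M}_\infty(X,\mathcal{A})$, whereas $fg=\chi_{\bigcup_n B_n}$ and $\{x\in X:\lvert fg(x)\rvert\geq\tfrac{1}{2}\}=\bigcup_n B_n$ is infinite, so $fg\notin\mathcal{M}_\infty(X,\mathcal{A})$.

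The hard part is engineering $f$ and $g$ so that $f$ has only finite super-$\epsilon$ level sets while $fg$ stays bounded below on an infinite set. Local finiteness is precisely the hypothesis that unlocks this—it guarantees that inside each $E_n$ one can carve out a finite measurable core $B_n$, which is what keeps $f$ inside $\mathcal{M}_\infty(X,\mathcal{A})$. Without local finiteness the $E_n$ might only come in infinite measurable packets, and any attempt to make $f$ nonzero throughout each $E_n$ would force $\{\lvert f\rvert\geq\epsilon\}$ to be infinite, derailing the whole scheme. I expect no further serious obstacle: measurability of $f$ and $g$ and the level-set bookkeeping are routine once the $B_n$'s are in place.
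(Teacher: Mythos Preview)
Your proposal is correct and matches the paper's argument almost verbatim: the paper likewise disposes of the case $\mathcal{A}$ finite trivially, then from an infinite $\mathcal{A}$ builds pairwise disjoint finite $B_n\in\mathcal{A}$ (via Lemma~2.12 of \cite{ABS} and local finiteness) and uses the same pair $f=\sum\frac{1}{n}\chi_{B_n}$, $g=\sum n\,\chi_{B_n}$ to witness both $\mathcal{M}_F\neq\mathcal{M}_\infty$ and that $\mathcal{M}_\infty$ is not an ideal. The only cosmetic differences are that you organize the equivalence as a cycle $(1)\Rightarrow(2)\Rightarrow(3)\Rightarrow(1)$ and make explicit both the intersection $B_n=A_n\cap E_n$ and the reason $X$ is finite when $\mathcal{A}$ is, whereas the paper proves $\neg(1)\Rightarrow\neg(2)\wedge\neg(3)$ in one pass and leaves those points implicit.
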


\begin{proof}
If $\mathcal{A}$ is finite, then $\mathcal{M}_F(X,\mathcal{A})=\mathcal{M}_\infty(X,\mathcal{A})=\mathcal{M}^*(X,\mathcal{A})=\mathcal{M}(X,\mathcal{A})$. So assume that $\mathcal{A}$ is infinite. Then by Lemma 2.12 in \cite{ABS}, there is a countably infinite family $\{A_n\}_{n}$ of pairwise disjoint nonempty members of $\mathcal{A}$. For each $n\in\mathbb{N}$, let $x_n\in A_n$. Since $(X,\mathcal{A})$ is locally finite, there exists a finite subset $B_n$ of $A_n$ such that $x_n\in B_n\in \mathcal{A}$. Define $f:X\mapsto\mathbb{R}$ as follows: $f(B_n)=\frac{1}{n}$ for all $n\in\mathbb{N}$ and $f(X\setminus \displaystyle \bigcup_{n\in\mathbb{N}}B_n)=0$. Then $f\in\mathcal{M}(X,\mathcal{A})$ by pasting lemma. Also $f\in \mathcal{M}_\infty(X,\mathcal{A})$ but $f\notin \mathcal{M}_F(X,\mathcal{A})$, since the set on which $f$ is not zero is infinite. Define $g:X\mapsto\mathbb{R}$ as follows: $g(B_n)=n$ and $g(X\setminus \displaystyle \bigcup_{n=1}^\infty B_n)=0$. then $g\in\mathcal{M}(X,\mathcal{A}), fg=1$ on $\displaystyle \bigcup_{n=1}^\infty B_n$. Hence $fg\notin \mathcal{M}_\infty (X,\mathcal{A})$ as $\displaystyle \bigcup_{n=1}^\infty B_n$ is an infinite set. Thus $\mathcal{M}_\infty (X,\mathcal{A})$ is not an ideal of $\mathcal{M}(X,\mathcal{A})$.
\end{proof}

In the next result, we find out a condition on the $\sigma$-algebra $\mathcal{A}$ on $X$, both necessary and sufficient to make $\mathcal{M}_F(X,\mathcal{A})$ and $\mathcal{M}_\infty(X,\mathcal{A})$ trivial ideals of $\mathcal{M}(X,\mathcal{A})$.

\begin{theorem}
The following statements are equivalent.
\begin{itemize}
\item[(1)] $(X,\mathcal{A})$ is nowhere locally finite (i.e. no nonempty member of $\mathcal{A}$ is a finite set).
\item[(2)] $\mathcal{M}_F(X,\mathcal{A})=\{0\}$.
\item[(3)] $\mathcal{M}_\infty(X,\mathcal{A})=\{0\}$.
\end{itemize}
\end{theorem}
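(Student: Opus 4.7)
The plan is to establish the cyclic chain $(1) \Rightarrow (3) \Rightarrow (2) \Rightarrow (1)$, which is the most economical route since the containment $\mathcal{M}_F(X,\mathcal{A}) \subseteq \mathcal{M}_\infty(X,\mathcal{A})$ (noted in the discussion preceding the theorem) makes one of the implications automatic.

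For $(1) \Rightarrow (3)$, I would take an arbitrary $f \in \mathcal{M}_\infty(X,\mathcal{A})$ and aim to show $f \equiv 0$. For each $\epsilon > 0$, the set $E_\epsilon = \{x \in X : |f(x)| \geq \epsilon\}$ is finite by definition of $\mathcal{M}_\infty(X,\mathcal{A})$. The key observation is that $|f|$ is measurable whenever $f$ is, so $E_\epsilon = |f|^{-1}([\epsilon,\infty)) \in \mathcal{A}$. Hypothesis $(1)$ then forces $E_\epsilon = \emptyset$, i.e.\ $|f(x)| < \epsilon$ for every $x \in X$. Since $\epsilon > 0$ was arbitrary, $f = 0$.

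The implication $(3) \Rightarrow (2)$ is immediate from $\mathcal{M}_F(X,\mathcal{A}) \subseteq \mathcal{M}_\infty(X,\mathcal{A})$: if the larger ring collapses to $\{0\}$, so does the smaller one.

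For $(2) \Rightarrow (1)$, I would argue by contrapositive. If $\mathcal{A}$ does contain some nonempty finite set $A$, then the characteristic function $\chi_A$ lies in $\mathcal{M}(X,\mathcal{A})$ and satisfies $X \setminus Z(\chi_A) = A$, which is a nonempty finite set. Hence $\chi_A$ is a nonzero element of $\mathcal{M}_F(X,\mathcal{A})$, contradicting $(2)$.

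There is no real obstacle here; the whole argument hinges on the single observation that the superlevel sets $\{|f| \geq \epsilon\}$ are automatically measurable, which converts the combinatorial hypothesis in $(1)$ directly into a bound on $|f|$. The mild subtlety, if any, is only in remembering to invoke this measurability in the $(1) \Rightarrow (3)$ step rather than citing the (weaker) membership $\{f \geq \epsilon\} \in \mathcal{A}$ built into the definition of measurability.
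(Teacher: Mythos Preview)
Your proof is correct and uses essentially the same ingredients as the paper's: the characteristic function $\chi_A$ for $(2)\Rightarrow(1)$, and the measurability of $\{|f|\geq\epsilon\}$ for the link between (1) and (3). The only cosmetic difference is organization---the paper proves $(1)\Leftrightarrow(2)$ directly and handles $(1)\Rightarrow(3)$ by contrapositive, leaving $(3)\Rightarrow(2)$ implicit via the containment $\mathcal{M}_F\subseteq\mathcal{M}_\infty$, whereas you run the cycle $(1)\Rightarrow(3)\Rightarrow(2)\Rightarrow(1)$ and are more explicit about why $E_\epsilon\in\mathcal{A}$.
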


\begin{proof}
$(2)\Rightarrow (1):$ Suppose that $(1)$ is false. Then there is a finite nonempty $E\in\mathcal{A}$. The characteristic function $\chi_{E}\neq 0$ and is a member of $\mathcal{M}_F(X,\mathcal{A})$.

$(1)\Rightarrow (2)$ is trivial.

$(1)\Rightarrow (3):$ Suppose that $(3)$ is false. Then there is a $g\in\mathcal{M}_\infty(X,\mathcal{A})$, where $g\neq 0$. So there exists an $x_\circ\in X$ such that $\lvert g(x_\circ)\rvert=\lambda>0$. Then $\{x\in X:\lvert g(x)\rvert\geq \frac{\lambda}{2}\}$ is a nonempty finite set in $X$. Hence $(1)$ is false.
\end{proof}

\begin{example}
Here is an example of an infinite $\sigma$-algebra that has no nonempty finite sets.
For each $n\in \mathbb{N}$, let $p_n$ be the $n$-th prime. Let $\mathbb{N}_n=\{p_n^k:k\geq 1\}$, for each $n\geq 1$, let $\mathbb{N}_0=\mathbb{N}\setminus\cup_{n=1}^\infty\mathbb{N}_n$. Let $\mathcal{A}$ be the smallest $\sigma$-algebra on $\mathbb{N}$ containing all the $\mathbb{N}_n$ for $n\geq 0$. Then $\mathcal{A}$ is an infinite $\sigma$-algebra that has no nonempty finite sets.
\end{example}

We conclude this section by identifying some relevant topological nature of $\mathcal{M}_\infty(X,\mathcal{A})$ and $\mathcal{M}_F(X,\mathcal{A})$ as subspaces of the Banach algebra $\mathcal{M}^*(X,\mathcal{A})$ with supremum norm.

\begin{theorem}
\begin{itemize}
\item[(a)] $\mathcal{M}_\infty(X,\mathcal{A})$ is a closed subalgebra of $\mathcal{M}^*(X,\mathcal{A})$.
\item[(b)] $\mathcal{M}_F(X,\mathcal{A})$ is dense in $\mathcal{M}_\infty(X,\mathcal{A})$ in the uniform norm topology.
\end{itemize}
\end{theorem}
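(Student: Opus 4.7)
The plan is to verify part (a) by a standard $\varepsilon/2$ argument on the sub-level sets, and part (b) by an explicit truncation of $f \in \mathcal{M}_\infty(X,\mathcal{A})$ outside a finite set where it is large. Both arguments are direct; the subalgebra part of (a) mostly rests on the already-noted fact that $\mathcal{M}_\infty(X,\mathcal{A})$ is a subring of $\mathcal{M}^*(X,\mathcal{A})$ (closure under real scalars is trivial since scaling by $\lambda\neq 0$ does not change which points satisfy $|f(x)|\geq \varepsilon/|\lambda|$).

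For (a), suppose $\{f_n\}\subseteq \mathcal{M}_\infty(X,\mathcal{A})$ converges uniformly to some $f\in \mathcal{M}^*(X,\mathcal{A})$; since measurability is preserved under uniform limits, $f$ is automatically in $\mathcal{M}^*(X,\mathcal{A})$. Given $\varepsilon>0$, pick $n$ with $\|f-f_n\|_\infty<\varepsilon/2$. The elementary inclusion
\[
\{x\in X:|f(x)|\geq \varepsilon\}\subseteq \{x\in X:|f_n(x)|\geq \varepsilon/2\}
\]
holds because $|f(x)|\geq\varepsilon$ and $|f(x)-f_n(x)|<\varepsilon/2$ force $|f_n(x)|>\varepsilon/2$. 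The right-hand set is finite since $f_n\in\mathcal{M}_\infty(X,\mathcal{A})$, so the left is finite as well, and $f\in\mathcal{M}_\infty(X,\mathcal{A})$.

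For (b), fix $f\in\mathcal{M}_\infty(X,\mathcal{A})$ and $\varepsilon>0$, and set $E_\varepsilon=\{x\in X:|f(x)|\geq\varepsilon\}$. This set is finite by hypothesis, and it belongs to $\mathcal{A}$ because it is the preimage of the Borel set $\{y\in\mathbb{R}:|y|\geq\varepsilon\}$ under the measurable function $f$. Define $g=f\cdot \chi_{E_\varepsilon}$. Then $g\in \mathcal{M}(X,\mathcal{A})$, and $X\setminus Z(g)\subseteq E_\varepsilon$ is finite, so $g\in \mathcal{M}_F(X,\mathcal{A})$. Moreover $f-g$ vanishes on $E_\varepsilon$ and satisfies $|f(x)-g(x)|=|f(x)|<\varepsilon$ on $X\setminus E_\varepsilon$, hence $\|f-g\|_\infty\leq\varepsilon$. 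Letting $\varepsilon\downarrow 0$ yields the density of $\mathcal{M}_F(X,\mathcal{A})$ in $\mathcal{M}_\infty(X,\mathcal{A})$ in the supremum norm.

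There is no real obstacle here: once one notices the two standard observations (that sublevel sets of uniform limits are controlled by sublevel sets of approximants, and that truncation of an $\mathcal{M}_\infty$-function outside its finite ``large-value'' set lands in $\mathcal{M}_F$), the proof is almost mechanical. The only minor point worth stating explicitly is the measurability of $E_\varepsilon$, to ensure that $\chi_{E_\varepsilon}$ and hence $g$ belong to $\mathcal{M}(X,\mathcal{A})$; this follows immediately from the measurability of $f$ and does not require any separatedness or local finiteness assumption on $(X,\mathcal{A})$.
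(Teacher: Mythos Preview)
Your proof is correct and follows essentially the same approach as the paper: the same $\varepsilon/2$ inclusion of sublevel sets for (a), and the same truncation $g=f\cdot\chi_{E_\varepsilon}$ for (b). The only difference is cosmetic---you verify $g\in\mathcal{M}_F(X,\mathcal{A})$ directly via $X\setminus Z(g)\subseteq E_\varepsilon$, while the paper invokes that $\mathcal{M}_F(X,\mathcal{A})$ is an ideal; and you make explicit the measurability of $E_\varepsilon$, which the paper leaves implicit.
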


\begin{proof}
(a) Let $\lim\limits_{n\mapsto \infty}f_n=f$ in the Banach algebra $\mathcal{M}^*(X,\mathcal{A})$ where each $f_n\in\mathcal{M}_\infty(X,\mathcal{A})$. It is enough to show that $f\in \mathcal{M}_\infty(X,\mathcal{A})$. Choose $\epsilon>0$. Then there is a $k\in \mathbb{N}$ such that $\displaystyle \sup_{x\in X}\lvert f_k(x)-f(x)\rvert<\frac{\epsilon}{2}$. Hence $\{x\in X:\lvert f(x)\rvert\geq \epsilon\}\subseteq \{x\in X:\lvert f_k(x)\rvert\geq\frac{\epsilon}{2}\}$. Since $f_k\in \mathcal{M}_\infty(X,\mathcal{A})$, the later set is finite. It follows that $\{x\in X:\lvert f(x)\rvert \geq \epsilon\}$ is finite. Thus $f\in \mathcal{M}_\infty(X,\mathcal{A})$.

(b) Let $f\in\mathcal{M}_\infty(X,\mathcal{A})$
and $\epsilon>0$. Then the set $K=\{x\in X:\lvert f(x)\rvert\geq\epsilon\}$ is finite. It follows that the characteristic function $\chi_{K}\in\mathcal{M}_F(X,\mathcal{A})$ and hence $f\chi_K\in\mathcal{M}_F(X,\mathcal{A})$ as the later family of functions is already an ideal of the ring $\mathcal{M}(X,\mathcal{A})$. We check that for each $x\in X$, $\lvert(f\chi_K)(x)-f(x)\rvert\leq \epsilon$. Hence $\displaystyle \sup_{x\in X}\lvert(f\chi_K)(x)-f(x)\rvert\leq\epsilon$ i.e. $\|f\chi_K-f\|\leq\epsilon$.
\end{proof}

The following problem seems to be open:

\begin{question}
	Is $\mathcal{M}_\infty(X)$ equal to the intersection of all free maximal ideals of $\mathcal{M}^*(X,\mathcal{A})?$
\end{question}

\textbf{Acknowledgments} The authors take the pleasure to acknowledge Professor Alan Dow for suggesting the example 2.14.

\bibliographystyle{plain}

\begin{thebibliography}{6}

\bibitem{ABS}
S. K. Acharyya, S. Bag, J. Sack, Ideals in Rings and Intermediate Rings of Measurable Functions, arXiv:1806.02860, 2018.	
	
\bibitem{AAMH2013}
A.~Amini, B.~Amini, E.~Momtahan, M.H.S.~Haghighi,
Generalized rings of measurable and continuous functions, Bulletin of the Iranian Mathematical Society, \textbf{39(1)} (2013), 49--64.
\smallskip

\bibitem{AHM2009}
H.~Azadi,M.~Henriksen, E.~Momtahan,
Some properties of algebra of real valued measurable functions, Acta. Math. Hunger, \textbf{124(1--2)} (2009), 15--23.
\smallskip

\bibitem{EMY2018}
A.~Estaji, A.~Mahmoudi Darghadam, H.~Yousefpour,
Maximal ideals in rings of real measurable functions,
arXiv:1803.06271v1, March 16, 2018.
\smallskip

\bibitem{GJ}	 
L. Gillman and M. Jerison, \emph{Rings of Continuous Functions}. New York: Van Nostrand Reinhold Co., 1960.
\smallskip

\bibitem{Goodearl} 
K.R. Goodearl, Von Neumann regular rings, Krieger Pub. Co., Malabar, FL, 1991.

\bibitem{G1966}
H. Gordon ,
The maximal ideal space of a ring of measurable functions, American Journal of Mathematics, \textbf{88(4)} (1966), 827--843.
\smallskip

\bibitem{M1971}
G. D. Marco, A. Orsatti, 
Commutative rings in which every prime ideal is contained in a unique maximal ideal, Proceedings of the American Mathematical Society, \textbf{30(3)} (1971), 459--466.
\smallskip

\bibitem{M2010}
E.~Momtahan,
Essential ideals in rings of measurable functions, Communications in Algebra, \textbf{38} (2010), 4739--47.
\smallskip

\bibitem{PSW}
P.~Panman, J.~Sack, S.~Watson,
Correspondences between ideals and $z$-filters for rings of continuous functions between $C^*$ and $C$, Commentationes Mathematicae, \textbf{52(1)}, (2012), 11--20.
\smallskip


\bibitem{RW1987}
L.~Redlin, S.~Watson,
Maximal ideals in subalgebras of $C(X)$, Proceedings of the American Mathematical Society, \textbf{100(4)} (1987), 763--766.
\smallskip

\bibitem{RW1997}
L.~Redlin, S.~Watson,
Structure spaces for rings of continuous functions with applications to realcompactifications, Fundamenta Mathematicae, {\textbf 152} (1997), 151--163.
\smallskip

\bibitem{SW2014}
J.~Sack, S.~Watson,
$C$ and $C^*$ among intermediate rings,
Topology Proceedings, \textbf{43} (2014), 69--82.

\bibitem{SW2015}
J.~Sack, S.~Watson,
Characterizing $C(X)$ among intermediate $C$-rings on $X$,
Topology Proceedings, \textbf{45} (2015), 301--303.

\bibitem{V1974}
R. Viertl,
Durch messbare Abbidungen Induzierte Homomorphismen von Ringen Messbarer Funktionen. (German), Papers presented at the Fifth Balkan Mathematical Congress (Belgrade, 1974). Math. Balkanica 4: 665--667.
\smallskip

\bibitem{V1977}
R. Viertl,
A note on maximal ideals and residue class rings of rings of measurable functions, An. Acad. Brasil. Ci., \textbf{49} (1977), 357--358.
\smallskip

\bibitem{V1978}
R. Viertl,
Eine Charakterisierung von $\sigma$-Algebren von Teilmengen einer Menge und elementare Trennungseigenschaften von Messraumen (German), Math. Balkanica, \textbf{6} (1978), 296--297.
\smallskip

\bibitem{V1981}	
R. Viertl,
Ideale in ringen messbarer funktionen, (German) Math. Nachr., \textbf{79}, (1981), 201--205.
\end{thebibliography}

\end{document}